\DeclareMathAlphabet{\mathpzc}{OT1}{pzc}{m}{it}
\newtheorem{te}{Theorem}[section]
\newtheorem{os}[te]{Remark}
\newtheorem{prop}[te]{Proposition}
\numberwithin{equation}{section}
\newcommand {\puntomio} {\mathbin{\vcenter{\hbox{\scalebox{.45}{$\bullet$}}}}}
\def \l { \left( }
\def \r {\right) }
\def \ll { \left\lbrace }
\def \rr { \right\rbrace }
\newcommand{\rev}[1]{{\textcolor{black}{#1}}}
\begin{document}

	\title[]{Time-inhomogeneous jump processes and variable order operators}
	\author{Enzo Orsingher} 
		\email{enzo.orsingher@uniroma1.it}
	\address{Department of Statistical Sciences, Sapienza - University of Rome}
		\author{Costantino Ricciuti} 
		\email{costantino.ricciuti@uniroma1.it}
	\address{Department of Statistical Sciences, Sapienza - University of Rome}
		\author{Bruno Toaldo}
		\email{bruno.toaldo@uniroma1.it}
	\address{Department of Statistical Sciences, Sapienza - University of Rome}
	\keywords{Bochner subordination, subordinators, time-inhomogeneous evolution, multistable process, Bern\v{s}tein functions, fractional calculus, fractional Laplacian, subordinate Brownian motion}
	\date{\today}
	\subjclass[2010]{60G51, 60J75}

		\begin{abstract}
In this paper we introduce non-decreasing jump processes with independent and time non-homogeneous increments. Although they are not L\'evy processes, they somehow generalize subordinators in the sense that their Laplace exponents  are possibly different Bern\v{s}tein functions for each time $t$. By means of these processes, a generalization of subordinate semigroups in the sense of Bochner is proposed. Because of time-inhomogeneity, two-parameter semigroups (propagators) arise and we provide a Phillips formula which leads to time dependent generators.
The inverse processes are also investigated and the corresponding governing equations obtained in the form of generalized variable order fractional equations. An application to a generalized subordinate Brownian motion is also examined.
\end{abstract}

	\maketitle

\tableofcontents

\section{Introduction}
This paper is devoted to the study of non-negative, non-decreasing processes, say $\sigma^\Pi(t)$, with independent and non-stationary increments. We investigate their basic path and distributional properties with particular attention to the governing equations. Such processes are not L\'evy processes since the increments are not stationary and therefore they consist in a generalization of subordinators. The relationship with L\'evy processes is highlighted by the fact that
\begin{align}
\mathds{E} e^{-\lambda \sigma^\Pi (t)} \,= \, e^{-\Pi (\lambda, t)}
\end{align}
where $\lambda \mapsto \Pi (\lambda, t)$ is a Bern\v{s}tein function for all fixed $t \geq 0$, i.e.
\begin{align}
\Pi (\lambda, t) \, = \, &  \lambda b(t) + \int_0^\infty \l 1-e^{-\lambda s} \r \phi(ds, t) 
\end{align}
under suitable assumptions on the couple $\l b(t), \phi(ds, t) \r$. \rev{It turns out that such processes are stochastically continuous and a.s. right-continuous and therefore they are additive processes in the sense of \cite[Definition 1.6]{sato}}. We focus our attention on the case where
\begin{align}
\Pi (\lambda, t) \, = \,  &\int_0^t f(\lambda, s) \, ds \notag \\
= \, & \int_0^t \l \lambda \,   b^\prime(w)  + \int_0^\infty \l 1-e^{-\lambda s} \r\nu(ds, w) \r dw
\label{piint}
\end{align}
where $\lambda \mapsto f(\lambda, s)$ is a Bern\v{s}tein function for each fixed $s \geq 0$. Without the condition of stationarity, the increments $\sigma ^{\Pi} (t)-\sigma ^{\Pi}(s)$ have a distribution $\mu_{s,t}$ depending on both $s$ and $t$, and thus we have to deal with a two-parameter semigroup (also known as propagator). We propose a generalization of the Phillips' formula for subordinate semigroups in the sense of Bochner: if $A$ generates a $C_0$-semigroup of operators $T_t$, $t \geq 0$, on some Hilbert space $\mathfrak{H}$, then the map $t \mapsto \mathcal{T}_{s,t}u =  \int_0^\infty T_wu \, \mu_{s,t} (dw)$ is shown to solve the abstract Cauchy problem
\begin{align}
\frac{d}{dt}q(t) \, = \, -f\l -A,t \r q(t),  \qquad q(s)=u \in \textrm{Dom}(A),
\end{align}
where, for $u \in \textrm{Dom}(A)$,
\begin{align}
-f(-A,t)u \, = \, b^\prime (t)Au + \int_0^\infty \l T_su - u \r \nu(ds, t).
\label{ft}
\end{align}
If $f(x, t) = x^{\alpha (t)}$, for a suitable function $\alpha(t)$ strictly between zero and one, we have
\begin{align}
-f(-A, t) \, = \,- \l -A \r^{\alpha (t)}
\end{align}
and in this sense we say that the operator \eqref{ft} is a \emph{variable order} operator. Furthermore, under suitable assumptions, the one-dimensional marginal of $\sigma^\Pi (t)$ has a density $\mu(x, t)$ satisfying equations of the form
\begin{align}
\frac{\partial}{\partial t} u(x, t) \, = \,- \frac{\partial}{\partial x} \int_0^x u(s,t) \, \bar{\nu}(x-s, t) ds
\label{govsub}
\end{align}
where $\bar{\nu}(s, t) \, = \, \nu((s, \infty), t)$. In the case $\Pi (\lambda, t) = \int_0^t \lambda^{\alpha(s)}ds$, equation \eqref{govsub} becomes
\begin{align}
\frac{\partial}{\partial t} u(x, t) \, = \,- \frac{\partial^{\alpha (t)}}{\partial x^{\alpha (t)}} u(x, t)
\label{18}
\end{align}
where the fractional derivative on the right-hand side must be understood in the Riemann-Liouville sense. The inverse processes
\begin{align}
L^\Pi (t) \, = \, \inf \ll s \geq 0 : \sigma^\Pi (s) > t \rr
\end{align}
are also investigated and the one-dimensional marginal of $L^\Pi (t)$ is shown to solve
\begin{align}
\frac{\partial}{\partial t} \int_0^t u(x,s) \bar{\nu}(t-s,x) ds \,-B_{t,x}u(x, t) = \, -\frac{\partial}{\partial x}u(x, t)
\label{geninv}
\end{align}
where $B_{t,x}$ is an integro-differential operator. Moreover, the composition of a homogeneous Markov process via $ L^\Pi (t) $ is investigated and the corresponding governing equation is derived.

There is a wide literature inspiring such results. First of all there are many papers devoted to the well-known natural relationship between stable subordinators and fractional equations. It is well known that the one-dimensional marginal of a stable subordinator $\sigma^\alpha (t)$ solves the fractional equation $\partial_tu = -\partial_x^\alpha u$. The density of the inverse of $\sigma^\alpha$ solves instead the time-fractional equation $\partial_t^\alpha u = -\partial_xu$. The reader can consult \cite{bazhlekova}, \cite[Chapter 8]{kolokoltsov}, \cite{libro meer} and the references therein for further information on these topics. Note that equation \eqref{18} straigthforwardly generalizes these facts, and furthermore it turns out to be the governing equation of the so-called multistable subordinator studied by \citet{molcha}. When the subordinator has a different Laplace exponent $f$ then the governing equation can be written (see \cite{toaldopota}) as
\begin{align}
\frac{\partial}{\partial t} u (x,t)\, = \,-b\frac{\partial}{\partial x}u(x,t) -\frac{\partial}{\partial x} \int_0^x u(s,t) \bar{\nu}(x-s) ds
\label{subpota}
\end{align}
and \eqref{govsub} is the \emph{variable order} generalization of \eqref{subpota} since the kernel of  the convolution appearing in \eqref{govsub} depends on the time variable. 
Concerning the inverse processes we have a classical result due to \citet{baem} which states that the stochastic solution to a time-fractional Cauchy problem is a L\'evy motion time-changed via the inverse of an $\alpha$-stable subordinator. This framework has been generalized to other subordinators and inverses by different authors. In \citet{meertri} the authors pointed out the the scale limit of a continuous time random walk in $\mathbb{R}^d$ is a L\'evy motion time-changed via the inverse of a general subordinator with Laplace exponent $f(\lambda)$. In this case is proved that the one dimensional marginal solves $f(\partial_t) u= Au$ where $f(\partial_t)u: = \mathcal{L}^{-1} \left[f(\lambda) \widetilde{u}(\lambda)-\lambda^{-1}f(\lambda) u(0) \right](t)$. In \cite{kochu} the author studied operators of the form
\begin{align}
\frac{\partial}{\partial t} \int_0^t u(x,s) k(t-s) ds 
\label{opkoch}
\end{align}
under suitable assumptions on the Laplace transform $\mathcal{L} [k]$. He relates \eqref{opkoch} with the Poisson process time-changed via the inverse of a subordinator, described in \cite{meerpoisson}. In \cite{toaldopota} it is shown that by means of the tail $\bar{\nu}(s)$ of a L\'evy measure $\nu(\puntomio)$ it is possible to write the equation of the inverse of a general subordinator as
\begin{align}
b\frac{\partial}{\partial t} u(x, t) \, + \, \frac{\partial}{\partial t} \int_0^t u(x, s) \, \bar{\nu}(t-s) \, ds \, = \, - \frac{\partial}{\partial x} u(x, t).
\label{tp}
\end{align}
The case in which $\bar{\nu}(s)$ is the tail of the L\'evy measure corresponding to the Laplace exponent $f(\lambda) = \lambda^{2\alpha}+2\mu \lambda^\alpha$, for $\alpha \in (0, 1/2)$ and $\mu>0$, is investigated in detail in \cite{dovetal} and is related to finite velocity random motions. Note that equation \eqref{geninv} generalizes \eqref{tp} in the sense the convolution-type operator on the left-hand side now depends on the space variable. We suggest \cite{bnbook} for a nice summary of the classical theory of time-changed processes. For a different approach to time-changes and the corresponding fractional equations the reader can consult \cite{allouba, orsptrf, orsann}.

An interesting particular case of the processes studied in the present paper is given by the multistable subordinator. Multistable processes provide models to study phenomena which locally look like stable L\'evy motions, but where the stability index evolves in time.
There are two different types of multistable processes (for a complete discussion  see \cite{legueve2}). The first one is the so-called field-based process (see \citep{falco2} \citep{legueve1}), which is neither a markovian nor a pure-jump process. The second one is the  multistable process with independent increments (see \cite{falco1}) with Laplace exponent
\begin{align}
\Pi (\lambda, t) \, = \, \int_0^t \lambda^{\alpha(s)} ds,
\end{align}
which can be considered as the prototype of non-homogeneous subordinators.

\section{Non-homogeneous subordinators}
 Our research concerns the c\'adl\`{a}g processes
\begin{align} 
\sigma^{\Pi}(t) \, = \, b(t) + \sum_{0 \leq s \leq t} \mathpzc{e}(s), \qquad t\geq 0,
\label{definnhs}
\end{align}
where $[0, \infty) \ni t \mapsto b(t)$ is a non-negative, differentiable function such that $b(0) = 0$, and $\mathpzc{e}(s)$ is a Poisson point process in $\mathbb{R}^+ $ with characteristic measure $\nu (dx,dt)$. We will work  throughout the whole paper under the following assumptions
\begin{enumerate}
\item[A1)] $\nu(ds , \puntomio)$ is absolutely continuous with respect to the Lebegue measure, i.e. there exists a density such that $\nu(ds, dt)=\nu(ds, t)dt$. Furthermore the family of measures $\ll \nu(ds, t)  \rr_{t \geq 0}$ is such that the function $t \mapsto  \nu(ds, t)$ is continuous for each $t$.
\item[A2)] for all $t \geq 0$,
\begin{align}
\int_{(0, \infty) \times [0,t]} (x \wedge 1) \nu(dx, s)ds \,  < \, \infty.
\label{12}
\end{align}
\end{enumerate}
We call $\sigma^{\Pi}(t)$, $t>0$, a \emph{non-homogeneous subordinator}. 

Definition \eqref{definnhs} consists in a slight generalization of the L\'evy-It\^{o} decomposition \cite{itodec} which holds for non-decreasing L\'evy processes (subordinators). Therefore $\sigma ^{\Pi} (t)$ retains some important properties of the usual subordinators (that is the increments are independent and the sample paths are non-decreasing) but presents a fundamental difference consisting  in the non-stationarity of the increments (whose distribution is here assumed to be time-dependent).
Hence, the number of points of the poissonian process in any Borel set $\bm{B} \subset \mathbb{R}^+ \times \mathbb{R}^+$ of the form $\bm{B} = B \times [s, t]$, where $B \subset (0, \infty)$, possesses a Poisson distribution with parameter 
\begin{align}
m(\bm{B}) = \int_{\bm{B}} \nu (dx, s)ds \, = \, \int_B \int_{[s,t]} \nu(dx, w) dw.
\end{align}
In particular, the expected number of jumps of size $[x,x+dx)$ occurring up to an arbitrary instant $t$ is given by
\begin{align}
\phi (dx,t) = \int_0^t \nu (dx,\tau)d\tau.
\end{align}
In view of \eqref{12}, which implies that
\begin{align}
\int_0 ^{\infty} (x \wedge 1) \phi(dx, t) \,  < \, \infty \qquad \forall t>0,
\end{align}
we can apply Campbell theorem (see, for example, \cite[p. 28]{king}) to the process (\ref{definnhs}) in order to write that
\begin{align}
\mathds{E} e^{-\lambda \sigma^{\Pi}(t)} \, = \, e^{-\Pi (\lambda, t)}
\end{align}
where
\begin{align}
\Pi (\lambda, t) \, 
= \, & \lambda b(t)+  \int_0^\infty \l 1-e^{-\lambda x} \r  \phi (dx, t).
\label{laplaexp}
\end{align}
Thus the function
\begin{align}
\lambda \mapsto \Pi (\lambda, t) \, = \, \lambda b(t)  +  \int_0^\infty \l 1-e^{-\lambda x} \r \phi(dx, t)
\label{17}
\end{align}
is a Bern\v{s}tein function for each value of $t \geq 0$. We recall that a Bern\v{s}tein function $f$ is defined to be of class $C^\infty$ with $(-1)^{n-1}f^{(n)}(x) \geq 0$, for all $n \in \mathbb{N}$ \cite[Definition 3.1]{librobern}. Furthermore, a function $f$ is a Bern\v{s}tein function if and only if \cite[Theorem 3.2]{librobern}
\begin{align}
f(\lambda) \, = \,a+b\lambda + \int_0^\infty \l 1-e^{-\lambda s} \r \nu(ds)
\end{align}
where $a, b \geq 0$ and $\nu(ds)$ is a measure on $(0, \infty)$ such that
\begin{align}
\int_0^\infty (s\wedge 1) \nu(ds) < \infty.
\end{align}

Note that under A1) and A2), and the further assumption
\begin{align}
\int_0 ^{\infty} (x \wedge 1) \nu(dx, t) \,  < \, \infty,  \qquad \forall t\geq 0,
\label{211}
\end{align}

there exists a Bern\v{s}tein function $\lambda \mapsto f(\lambda , t)$ such that (\ref{17}) can be written as
\begin{align}
\Pi (\lambda, t) \, = \, \int_0^t \l \lambda b^\prime(w)  + \int_0^\infty \l 1-e^{-\lambda s} \r \nu(ds, w) \r dw \, = \, \int_0^t f(\lambda, w) dw.
 \end{align}

In what follows the function $s \mapsto \bar{\nu}(s, t)$ will denote the tail of the measure $\nu(ds, t)$, i.e.
\begin{align}
\gamma \mapsto \bar{\nu}(\gamma,t) \, = \, \nu ((\gamma, \infty),t), \qquad \gamma >0.
\label{taildef}
\end{align}

\subsection{Paths properties}
The process $\sigma^{\Pi}(t)$, $t \geq 0$, is the sum over a Poisson process, hence it has independent increments.
As shown in the following theorems, $\sigma ^{\Pi}(t)$ is continuous a.s. and, under suitable conditions, strictly increasing on any finite interval.

\begin{te}
\label{tecp}
The process $\sigma^{\Pi}(t)$ is a.s. continuous, i.e., $\sigma^\Pi(t) = \sigma^\Pi(t-)$, a.s., for each fixed $t > 0$.
\end{te}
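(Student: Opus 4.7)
The plan is to decompose the potential jump of $\sigma^{\Pi}$ at $t$ into a deterministic drift contribution and a Poissonian contribution, and show that each vanishes almost surely. First I would note that $b$ is assumed differentiable on $[0,\infty)$, hence continuous, so $b(t)-b(t-)=0$ automatically; this reduces the claim to proving $J(t)=J(t-)$ a.s., where $J(t) := \sum_{0 \leq s \leq t} \mathpzc{e}(s)$. Assumption A2) ensures $J(t) < \infty$ a.s.\ via the standard criterion $\int (g\wedge 1)\,dm<\infty$ applied to $g(x,s)=x\mathbf{1}_{[0,t]}(s)$, so $J(t-)=\lim_{u \uparrow t} J(u)$ is well defined and $J(t)-J(t-)$ is the actual size of the jump at $t$.

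Next I would reinterpret the sum as an integral against the underlying random counting measure. Writing $N$ for the Poisson point measure on $(0,\infty) \times [0,\infty)$ with intensity $m(dx,dw) = \nu(dx,w)\,dw$, one has
\begin{align*}
J(t) - J(t-) \, = \, \int_{(0,\infty) \times \{t\}} x \, N(dx,dw),
\end{align*}
which is zero as soon as $N$ charges no point at height $t$. By the defining property of Poisson random measures, $N\bigl((0,\infty) \times \{t\}\bigr)$ is Poisson distributed with parameter
\begin{align*}
m\bigl((0,\infty) \times \{t\}\bigr) \, = \, \int_{\{t\}} \int_0^\infty \nu(dx,w)\,dw \, = \, 0,
\end{align*}
the final equality holding simply because $\{t\}$ has Lebesgue measure zero. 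Hence $N\bigl((0,\infty)\times\{t\}\bigr)=0$ a.s., and the desired left-continuity at $t$ follows.

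The only delicate point is that the inner integral $\int_0^\infty \nu(dx,w)$ may be infinite for every $w$ (infinite activity is compatible with the hypotheses), but this does not matter since the Lebesgue integral of any nonnegative measurable function over a null set is zero. If one prefers to avoid invoking this convention, the same argument can be carried out first on $[\varepsilon,\infty)\times\{t\}$, where the intensity is finite thanks to \eqref{211}, and then extended to $(0,\infty)\times\{t\}$ by monotone convergence as $\varepsilon\downarrow 0$. I expect this \emph{a priori} finiteness issue to be the only step requiring a word of caution; the rest is a direct application of the Poisson point process machinery together with the absolute continuity of the intensity measure in the time coordinate built into A1).
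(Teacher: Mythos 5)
Your proof is correct, but it takes a genuinely different route from the paper's. The paper first establishes stochastic continuity at $t$: it splits the increment over $(t-h,t]$ into jumps of size smaller than $1$ (controlled via Markov's inequality and A2) and jumps of size at least $1$ (controlled by the probability that the Poisson measure charges $(1,\infty)\times(t-h,t]$), and then upgrades to almost sure continuity at the fixed time by extracting an a.s.\ convergent subsequence along $t_n\uparrow t$ and invoking the c\'adl\`ag property of the paths. You instead observe directly that the jump at time $t$ equals the integral of $x$ against the Poisson random measure over the slice $(0,\infty)\times\{t\}$, whose intensity vanishes because the time coordinate of the characteristic measure is absolutely continuous with respect to Lebesgue measure (A1); since the intensity is $\sigma$-finite by A2, the slice is a.s.\ not charged and there is no jump. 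Your argument is shorter, does not need the c\'adl\`ag property as an input, and isolates the real reason for the result (atomlessness of the intensity in the time variable); the paper's computation has the side benefit of establishing stochastic continuity explicitly, which is then reused in Remark \ref{rem23} to conclude that $\sigma^\Pi$ is an additive process. Two minor remarks: your truncation fallback for the $0\cdot\infty$ issue is really justified by A2 rather than by \eqref{211}, which is an extra hypothesis not assumed in this theorem; and your explicit treatment of the drift term $b$ fills a step that the paper's proof passes over silently when it identifies the increment with the sum of the points of $\mathpzc{e}$.
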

\begin{proof}
Observe that for $h >0$
\begin{align}
\Pr \ll | \sigma^\Pi (t)  - \sigma^\Pi (t-h) | > \epsilon \rr \, = \, & \Pr \ll \sum_{ t-h < s \leq t   } \mathpzc{e}(s)  > \epsilon \rr \notag \\
\leq \, & \Pr \ll \sum_{t-h < s \leq t} \mathpzc{e}(s) \mathds{1}_{\ll \mathpzc{e}(s) <1 \rr} > \frac{\epsilon}{2} \rr \notag \\
& + \Pr \ll \sum_{t-h < s \leq t} \mathpzc{e}(s) \mathds{1}_{\ll \mathpzc{e}(s) \geq 1 \rr} > \frac{\epsilon}{2} \rr.
\label{135}
\end{align}
Now since
\begin{align}
\Pr \ll \sum_{t-h <s \leq t} \mathpzc{e}(s) \mathds{1}_{\ll \mathpzc{e}(s) <1 \rr}> \frac{\epsilon}{2} \rr  \leq \,  &\frac{2}{\epsilon} \mathds{E} \sum_{t-h < s \leq t} \mathpzc{e}(s)  \mathds{1}_{\ll \mathpzc{e}(s) <1 \rr} \notag \\
= \, & \frac{2}{\epsilon}\int_{(t-h,t]} \int_0^1  x\nu(dx, w)dw \stackrel{h \to 0 }{\longrightarrow}0.
\end{align}
For the second term of \eqref{135} we have that
\begin{align}
\Pr \ll \sum_{t-h < s \leq t} \mathpzc{e}(s) \mathds{1}_{\ll \mathpzc{e}(s) \geq 1 \rr} > \frac{\epsilon}{2} \rr \, \leq \,&  1- e^{- \int_1^\infty \int_{(t-h,t]} \nu(dx,w)dw} \notag \\ = \, & 1-e^{- h  \nu((1,\infty), w^\star)} \notag \\
\stackrel{h \to 0}{\longrightarrow} \, & 0.
\end{align}
Continuity in probability implies that for any sequence $t_n \uparrow t$ it is true that there exists a subsequence such that $\sigma^\Pi(t_n) \to \sigma^\Pi (t)$ a.s.. But since the processes $\sigma^\Pi(t)$ are c\'adl\`{a}g the left limit must exist a.s. and therefore it must be equal to $\sigma^\Pi(t)$. Thus the theorem is proved.
\end{proof}
It is well-known that a L\'evy process is strictly increasing on any finite interval if the L\'evy measure is supported on $(0, \infty)$ (and hence is a subordinator) and has infinite mass, $\nu(0, \infty) = \infty$ (see, for example, \cite[Theorem 21.3]{sato}). In our case a similar result is true.
\begin{prop}
\label{remsi}
Let $W$ be a finite interval of $[0, \infty)$. If $b^\prime(t) > 0$ for $t \in W \subseteq [0, \infty)$ the process $\sigma^\Pi (t)$ is a.s. strictly increasing in $W$. If $b^\prime (t) = 0$ for $t \in W$ but $ \nu ((0,\infty),t)= \infty$ for all $t\in W$, then the process $\sigma^{\Pi}(t)$ is a.s. strictly increasing on $W$.
\end{prop}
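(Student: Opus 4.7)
The plan is to treat the two cases separately, in both invoking the decomposition $\sigma^\Pi(t) = b(t) + J(t)$, where $J(t) := \sum_{0 \leq s \leq t} \mathpzc{e}(s)$ is non-decreasing in $t$ because every jump is non-negative. For $s<t$ one has
\[
\sigma^\Pi(t)-\sigma^\Pi(s) \, = \, \bigl(b(t)-b(s)\bigr) + \bigl(J(t)-J(s)\bigr), \qquad J(t)-J(s) \geq 0,
\]
so strict monotonicity reduces to producing a strictly positive contribution from at least one of the two summands.

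In the first case, with $b^\prime(w) > 0$ for all $w \in W$, the deterministic function $b$ is strictly increasing on $W$ by the mean value theorem. Hence $b(t)-b(s) > 0$ for every $s < t$ in $W$, and therefore $\sigma^\Pi(t) > \sigma^\Pi(s)$ surely; this case is immediate and requires no probabilistic input beyond the fact that $J$ is non-decreasing.

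In the second case, $b^\prime \equiv 0$ on $W$ but $\nu((0,\infty),w) = \infty$ for every $w \in W$, so the strict increase must come from the jump part. I would first argue pointwise: fix $s < t$ in $W$; by the Poisson structure of \eqref{definnhs}, the number of points of $\mathpzc{e}$ lying in the strip $(0,\infty) \times (s,t]$ is Poisson-distributed with mean
\[
\int_s^t \nu((0,\infty),w) \, dw \, = \, \infty,
\]
so almost surely infinitely many strictly positive jumps occur in $(s,t]$, and consequently $J(t)-J(s) > 0$ almost surely. To upgrade this pointwise assertion into the uniform statement of strict monotonicity, I would fix a countable dense subset $D \subset W$ and form the countable family of events $\{J(\tau) > J(\vartheta)\}$ indexed by pairs $\vartheta < \tau$ in $D$. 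Each of these has probability one, hence so does their intersection. On this full-probability event, given any $u < v$ in $W$, the density of $D$ allows us to pick $\vartheta, \tau \in D$ with $u \leq \vartheta < \tau \leq v$, and the monotonicity of $J$ then yields $J(v)-J(u) \geq J(\tau)-J(\vartheta) > 0$, whence $\sigma^\Pi(v) > \sigma^\Pi(u)$.

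The main obstacle I anticipate is exactly this last passage, namely moving from an a.s.\ strict inequality for each fixed pair $(s,t)$ to a single exceptional null set outside of which strict monotonicity holds throughout $W$. The countable-density trick, made available by the monotonicity of $J$, is the device that circumvents any uncountable measurability issue and makes the argument rigorous.
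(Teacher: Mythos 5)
Your proof is correct and follows essentially the same route as the paper: the drift case is immediate, and in the driftless case both arguments reduce to showing that the Poisson point process a.s. places a point in every strip $(0,\infty)\times(s,t]$ because $\int_s^t \nu((0,\infty),w)\,dw=\infty$ (the paper derives the a.s. divergence of the point count via a partition and Kingman's countable additivity theorem, where you simply invoke that a Poisson count with infinite mean is a.s. infinite). Your explicit countable-density argument to pass from fixed pairs $(s,t)$ to a single null set is a point the paper glosses over with ``since this is true for any interval,'' so your write-up is, if anything, slightly more careful.
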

\begin{proof}
If $b^\prime (w)>0$ for $w \in [s,t]$ then it is clear that the process $\sigma^\Pi(t)$ is strictly increasing. Now let $b^\prime (t) =0$.
Observe that the Poisson point process $\l \mathpzc{e}(s), s \geq 0 \r$ is defined as the only point $\mathpzc{e}(s) \in (0, \infty)$ such that for a Poisson random measure $\varphi (\puntomio)$ on $(0, \infty) \times [0, \infty)$ with intensity $\nu(dx, dt)$ it is true that
\begin{align}
\varphi|_{(0, \infty) \times \ll t \rr}(d\bm{x}) \, = \, \delta_{(\mathpzc{e}(t), t)} (d\bm{x})
\end{align}
where $\bm{x} \in (0, \infty) \times [0, \infty)$. Fix an interval of time $[s,t]$, then the probability that the process $\sigma^\Pi(t)$, $t \geq 0$, does not increase in $[s,t]$ is the probability that there are no points in $(0, \infty) \times [s,t]$. Let $E_j$, $j \in \mathbb{N}$, be a partition of $(0, \infty)$ with $\nu(E_j)< \infty$ for all $j\in\mathbb{N}$, then 
\begin{align}
\varphi ((0, \infty) \times [s, t]) \, = \, \sum_j \varphi_j \l \l (0, \infty) \cap E_j \r \times [s,t] \r
\label{216}
\end{align}
and by the countable additivity Theorem \cite[p. 5]{king} the sum \eqref{216} diverges with probability one since $\sum_j \nu(E_j) = \nu(0, \infty) = \infty$. Since this is true for any interval $[s,t]$ and if $\nu(0, \infty)=\infty$ the theorem is proved.
\end{proof}

\begin{os} \normalfont
\label{rem23}
\rev{The process $\sigma^\Pi (t)$, $t \geq 0$, defined as in \eqref{definnhs} is an additive process in the sense of \cite[Definition 1.6]{sato}. This is because
\begin{enumerate}
\item it has independent increments (it is the sum over a Poisson process);
\item $X_0 = 0$ a.s.;
\item it is stochastically continuous;
\item it is a.s. right-continuous with left-limit (Theorem \ref{tecp}).
\end{enumerate}
Stochastic continuity from the left (Item (3)) can be immediately verified as the stochastic continuity from the right (as in the proof of Theorem \ref{tecp}).
}
\end{os}

\subsection{Time-inhomogeneous random sums}
Since the compound Poisson process is the fundamental ingredient for the costruction of a standard subordinator, it is easy to imagine that random sums with time-dependent jumps play the same role in the definition of non-homogeneous subordinators. 

Let $N(t)$, $t \geq 0$, be a non-homogeneous Poisson process with intensity $g(t)$, $t\geq 0$, and let $T_j= \inf \{t>0 :N(t)=j \}$. We consider the random sum
\begin{align}
\label{compound poisson}
Z(t) = \sum _{j=1}^{N(t)} X(T_j)
\end{align}
where $X(T_{j})$ is the positive-valued jump occurring at time  $T_j$, having the conditional absolutely continuous distribution
\begin{align}
\Pr \{ X(T_j) \in dx | \, T_{j} =t \} = \psi (dx,t), \qquad x\geq 0,
\end{align}
with
\begin{align}
\int _0 ^{\infty} \psi (dx,t)=1, \qquad \forall t>0.
\end{align}
The random variable $Z(t)$ takes the value  $z=0$ with positive probability, and has a density for $z>0$. Indeed
\begin{align}
\Pr \{ Z(t)=0 \}= \Pr \{N(t)=0\}= e^{-\int _0 ^t g(\tau) d \tau }
\end{align}
and, for each $z>0$ we have that
\begin{align}
& \Pr \{ Z(t) \in dz \}\notag\\
=  &\sum _{n=1}^{\infty} \int _0^{t} \int_{t_1}^t \dots  \int _{t_{n-1}}^t 
  \Pr \{ Z(t) \in dz,   T_1 \in dt_1 , T_2 \in dt_2, \dots T_n \in dt_n , N(t)=n\}\notag\\
 = &\sum _{n=1}^{\infty} \int _0^{t} \int_{t_1}^t \dots  \int _{t_{n-1}}^t \Pr \ll \sum _{j=1}^{N(t)}X(T_j) \in dz| \, T_1 = t_1 , T_2 =t_2, \dots T_n = t_n , N(t)=n  \rr \notag \\
 & \times  \Pr  \ll  T_1 \in dt_1 , T_2 \in dt_2, \dots T_n \in dt_n , N(t)=n   \rr \notag\\
 = &\sum _{n=1}^{\infty} \int _0^{t} \int_{t_1}^t  \dots  \int _{t_{n-1}}^t   
  \Pr  \ll \sum _{j=1}^n X(t_j) \in dz \rr   \Pr \ll  T_1 \in dt_1 ,  \dots, T_n \in dt_n , N(t)=n  \rr\notag\\
 = &\sum _{n=1}^{\infty}\frac{1}{n!} \int _{[0,t]^n}
  \Pr  \ll \sum _{j=1}^n X(t_j) \in dz  \rr \Pr \ll  T_1 \in dt_1 , \dots T_n \in dt_n , N(t)=n  \rr.
\end{align} 
The first factor is given by the convolution integral
\begin{align}
 \Pr  \ll \sum _{j=1}^n X(t_j) \in dz  \rr \, = \, dz \int _{(0,\infty)^n} \, \psi (dx_1,t_1) \cdots \psi (dx_n, t_n) \delta \l z-\sum _{j=1}^nx_j \r
\end{align} 
while the second one can be computed as follows
\begin{align}
&\Pr \ll  T_1 \in dt_1 ,  \dots T_n \in dt_n , N(t)\, = \, n \rr \, = \,  g(t_1) \dots g(t_n) e^{-\int_0^t g(\tau) d \tau} dt_1 \dots dt_n.
\end{align}
Then we have
\begin{align}
&\Pr \{ Z(t) \in dz \} \notag  \\
= \, & dz \,  \sum _{n=1}^{\infty}\frac{1}{n!} \int _{[0,t]^n}dt_1 \dots dt_n \int _{(0,\infty)^n} \psi (dx_1,t_1) \cdots \psi (dx_n, t_n) \delta \biggl( z-\sum _{j=1}^nx_j \biggr) \notag \\
&\times e^{-\int_0^t g(\tau) d \tau}  g(t_1) \dots g(t_n).
\end{align}
We define the function
\begin{align}
\label{funzione phi}
\phi (x,t)= \int _0^t g(\tau) \psi (x,\tau) d \tau.
\end{align}
The density of $Z(t)$ can be written as
\begin{align}
\Pr \{ Z(t) \in dz \} = dz \, e^{-\int _0 ^t g(\tau) d \tau} \sum _{n=1}^{\infty} \frac{1}{n!} \int _{(0,\infty)^n}\delta \biggl (z-\sum _{j=1}^nx_j \biggr ) \phi (dx_1,t) \dots \phi (dx_n, t)
\end{align}
and therefore its Laplace transform is given by
\begin{align}
\mathbb{E} e^{-\lambda \, Z(t)}= \Pr \{ Z(t)=0 \}+ \int _{(0, \infty)} e^{-\lambda z} \Pr \{Z(t) \in dz \}
\end{align}
where
\begin{align}
&\int_{(0,\infty)} e^{-\lambda z} \Pr \{Z(t) \in dz \} \, \notag \\
 =  \, & e^{-\int _0 ^{t} g(\tau) d \tau} \sum _{n=1}^{\infty} \frac{1}{n!} \int _{(0,\infty)^n} e^{-\lambda \sum _{j=1}^n x_j} \phi (dx_1,t) \dots \phi (dx_n, t)\notag  \\
= \, & e^{-\int _0 ^{t} g(\tau) d \tau} \sum _{n=1}^{\infty} \frac{1}{n!} \biggl ( \int_{(0 ,\infty)}\, e^{-\lambda x} \phi (dx,t) \biggr )^n \notag  \\
= \, &  e^{-\int _0 ^{t} g(\tau) d \tau}( e^{   \int_{(0,\infty)} \, e^{-\lambda x} \phi (dx,t)  }-1).
\end{align}
Combining all pieces together we have that
\begin{align}
 \mathbb{E} e^{-\lambda Z(t)} &=e^{ -\int _0 ^ t g(\tau) d \tau +   \int_{(0,\infty)}  e^{-\lambda x} \phi (dx,t)} \notag \\
&= e^{-\int _0 ^{t} g(\tau) d \tau \int_{(0,\infty)} \psi(dx,\tau)  +  \int_{(0,\infty)}  \, e^{-\lambda x} \int_0 ^t g(\tau) \psi (dx, \tau) d \tau}   \notag \\
& = e ^{- \int_{(0,\infty)} (1-e^{-\lambda x}) \phi (dx, t) } \label{trasformata di laplace}
\end{align}
with $\phi (x,t)$ of the form \eqref{funzione phi}.
Observe therefore that in this case we have that
\begin{align}
\nu(dx, t) \, = \, \psi (dx, t) g(t) \text{ and } \nu((0, \infty),t) \, = \, g(t) \, < \, \infty.
\end{align}

\subsection{Distributional properties}
We remark that for each $t>0$ the distribution $\mu_t^\Pi(\puntomio)$ of $\sigma ^{\Pi}(t)$ is infinitely divisible. In fact for each $n \in \mathbb{N}$, $\mu _t^{\Pi}(.)$ is given by the $n$-convolution of the probability measure of the r.v.'s associated to the L\'evy exponent 
\begin{align}
\lambda \mapsto \frac{b(t)}{n}\lambda + \int_0^\infty \l 1-e^{-\lambda s} \r \frac{\nu(ds,t)}{n}
\end{align}
where $t$ is fixed. However, unlike what happens for L\'evy processes, such a distribution is not given by the $n$-th convolution of $\mu _{\frac{t}{n}}(\puntomio)$ because the increments are not stationary. \rev{As pointed out in Remark \ref{rem23} the process $\sigma^\Pi (t)$ is an additive process. Therefore the fact that $\mu_t (\puntomio)$ is infinitely divisible is also a consequence of \cite[Theorem 9.1]{sato}}.

It is crucial to observe that  $\sigma ^{\Pi}(t)$ can be approximated by means of a random sum of the form (\ref{compound poisson}), as stated in the following theorem.
\begin{te}
\label{convte}
Let $\sigma ^{\Pi}(t)$ be a non-homogeneous subordinator having Laplace exponent
\begin{align}
 \Pi (\lambda, t) \, = \, \lambda  b(t)   +\int_0^{\infty} (1-e^{-\lambda x}) \phi (dx,t) 
\end{align}
as in \eqref{17} and assume that $s \mapsto \bar{\nu}(s,t)$ is absolutely continuous on $(0, \infty)$ for all $t \geq 0$. Then there exists a process $Z_{\gamma}(t)$ of type (\ref{compound poisson}) such that, for $\gamma \to 0$, we have
\begin{align}
b(t) + Z_{\gamma} (t) \xrightarrow{d} \sigma^{\Pi} (t).
\end{align}
\end{te}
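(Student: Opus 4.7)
\medskip

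\noindent\textbf{Proof plan.} The natural strategy is truncation of small jumps: for each $\gamma>0$, keep only jumps of size $>\gamma$, which yields a finite-rate non-homogeneous compound Poisson process of the form \eqref{compound poisson}, and then let $\gamma\downarrow 0$. Concretely, I would set
\[
g_\gamma(t)\,=\,\bar{\nu}(\gamma,t)\,=\,\nu((\gamma,\infty),t),\qquad \psi_\gamma(dx,t)\,=\,\frac{\mathds{1}_{(\gamma,\infty)}(x)\,\nu(dx,t)}{\bar{\nu}(\gamma,t)},
\]
which is a well-defined probability kernel because assumption A2) together with \eqref{211} implies $\bar{\nu}(\gamma,t)<\infty$ for every $\gamma>0$ and every $t\geq 0$; the assumed absolute continuity of $s\mapsto \bar{\nu}(s,t)$ makes $\psi_\gamma(dx,t)$ absolutely continuous on $(\gamma,\infty)$ for each $t$, matching the framework of Section 2.2. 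I would then let $N_\gamma(t)$ be a non-homogeneous Poisson process with intensity $g_\gamma(t)$ and define
\[
Z_\gamma(t)\,=\,\sum_{j=1}^{N_\gamma(t)} X(T_j),\qquad \Pr\{X(T_j)\in dx\,|\,T_j=\tau\}\,=\,\psi_\gamma(dx,\tau).
\]

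\medskip

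\noindent The core computation is to invoke formula \eqref{trasformata di laplace} for the Laplace transform of $Z_\gamma(t)$. With the notation of \eqref{funzione phi}, the associated measure is
\[
\phi_\gamma(dx,t)\,=\,\int_0^t g_\gamma(\tau)\,\psi_\gamma(dx,\tau)\,d\tau\,=\,\int_0^t \mathds{1}_{(\gamma,\infty)}(x)\,\nu(dx,\tau)\,d\tau\,=\,\mathds{1}_{(\gamma,\infty)}(x)\,\phi(dx,t),
\]
so that
\[
\mathds{E}\,e^{-\lambda (b(t)+Z_\gamma(t))}\,=\,\exp\!\left(-\lambda b(t)-\int_{(\gamma,\infty)}\!\bigl(1-e^{-\lambda x}\bigr)\phi(dx,t)\right).
\]

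\medskip

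\noindent The final step is to pass to the limit $\gamma\downarrow 0$. Since $\lambda\geq 0$ gives $1-e^{-\lambda x}\geq 0$ and the truncating indicators $\mathds{1}_{(\gamma,\infty)}(x)$ increase pointwise to $\mathds{1}_{(0,\infty)}(x)$ as $\gamma\downarrow 0$, monotone convergence yields
\[
\int_{(\gamma,\infty)}\!\bigl(1-e^{-\lambda x}\bigr)\phi(dx,t)\,\xrightarrow[\gamma\downarrow 0]{}\,\int_0^\infty\!\bigl(1-e^{-\lambda x}\bigr)\phi(dx,t),
\]
and therefore $\mathds{E}\,e^{-\lambda(b(t)+Z_\gamma(t))}\to e^{-\Pi(\lambda,t)}=\mathds{E}\,e^{-\lambda\sigma^\Pi(t)}$ for every $\lambda\geq 0$. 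Convergence of Laplace transforms of nonnegative random variables on $[0,\infty)$ to a Laplace transform (which is automatically continuous at $\lambda=0$ since $\sigma^\Pi(t)$ is a.s.\ finite) implies convergence in distribution, which is exactly the claim.

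\medskip

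\noindent There is essentially no serious obstacle here; the only minor point to verify is that the integrability hypothesis \eqref{211} (together with A2) genuinely gives $\bar{\nu}(\gamma,t)<\infty$ for every fixed $\gamma,t>0$, so that the finite-rate approximating process $Z_\gamma(t)$ is well-defined and falls within the setting of Section 2.2. Everything else reduces to the Laplace-transform identity \eqref{trasformata di laplace} combined with monotone convergence.
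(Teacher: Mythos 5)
Your proposal is correct and follows essentially the same route as the paper: the same truncation at level $\gamma$ with $g_\gamma(t)=\bar{\nu}(\gamma,t)$ and $\psi_\gamma(dx,t)=\mathds{1}_{(\gamma,\infty)}(x)\,\nu(dx,t)/\bar{\nu}(\gamma,t)$, the same appeal to the Laplace transform formula of Section 2.2, and the same passage to the limit of the Laplace exponents. You merely spell out in more detail the monotone-convergence step and the continuity theorem for Laplace transforms, which the paper leaves implicit.
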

\begin{proof}
Let $\bar{\nu}$ be the function in \eqref{taildef}. Then
\begin{align}
\psi_{\gamma} (dx,t)\, := \, \frac{\nu (dx,t)}{\bar{\nu}(\gamma,t)}  \mathds{1}_{(\gamma , \infty)}(x)
\label{236}
\end{align}
is a probability distribution, because it is positive and integrates to 1. Let us consider the process
\begin{align}
Z_{\gamma}(t)= \sum _{j=1}^{N_{\gamma}(t)}X(T_j)
\end{align}
where $N_{\gamma}(t)$ is a non-homogeneous Poisson process with rate $g_{\gamma}(t)$. We assume $g_{\gamma}(t)= \bar{\nu}(\gamma,t)$ and use \eqref{236} to write
\begin{align}
 \Pr \{X(T_j) \in dx|T_j=t \}= \psi _{\gamma}(dx,t).
\end{align} 
In view of the discussion of Section 2.2 we have that
\begin{align}
p_{Z_\gamma}(dx,t) \,  = \, e^{-\int _0 ^t \bar{\nu}(\gamma, \tau) d \tau} \sum _{n=1}^{\infty} \frac{1}{n!} \phi^{*n}(dx, t) \mathds{1}_{\ll x >  \gamma \rr} + e^{-\int_0^t \bar{\nu}(\gamma, \tau) d\tau} \delta_0(dx)
\end{align}
and
\begin{align}
\mathbb{E}e^{-\lambda  b(t)  -\lambda Z_{\gamma}(t)} \,=  \, e^{-\lambda b(t) -  \int _{\gamma} ^{\infty} (1-e^{-\lambda x}) \int_0^t \psi_\gamma (dx,\tau)g_\gamma(\tau)d\tau }
\end{align}
which converges to $\mathbb{E} e^{-\lambda \sigma^{\Pi}(t)}$ as $\gamma \to 0$.
\end{proof}
Theorem \ref{convte} provides a method to construct an  approximating process. As an example, let's apply such a method to the multistable subordinator defined in \citet{molcha}. In this case, the time-dependent L\'evy measure is
\begin{align}
\phi(dx,t)= dx\int _0^t \frac{\alpha (\tau) x^{-\alpha(\tau) -1}}{\Gamma (1-\alpha (\tau))}d\tau 
\end{align}
for a suitable stability index $\tau \mapsto \alpha (\tau)$ with values in $(0,1)$ in such a way that the conditions A1) and A2) are fulfilled.

The Laplace transform thus reads
\begin{align}
\mathbb{E}e^{-\lambda \sigma^{\Pi}(t)} = e^{-\int _0 ^{\infty} \l 1-e^{-\lambda x} \r \phi (dx,t) }= e ^{-\int _0 ^t \lambda ^{\alpha (s) }ds}
\end{align}
Being
\begin{align}
\bar{\nu} (\gamma ,t)=\int _{\gamma}^{\infty} \nu (dx,t)  = \frac{\gamma^{-\alpha(t) }}{\Gamma (1-\alpha(t)) }
\end{align} 
the approximating process $Z_{\gamma}(t)$ is based on a non-homogeneous Poisson process with intensity $g_{\gamma}(t)= \frac{\gamma^{- \alpha (t)}}{\Gamma (1-\alpha(t)) }$ and jump distribution
\begin{align}
 \psi _{\gamma}(x,t)= \gamma ^{\alpha(t) } \alpha(t)  x^{-\alpha(t) -1}\mathds{1}_{[\gamma, \infty]}(x).
 \end{align}

A convenient way to deal with the non-homogeneity of the multistable process  is to consider its localizability.
We remind that $\sigma^\Pi(t)$ il localizable at $t$ if the following limit holds in distribution (see, for example, \cite{legueve2})): 
\begin{align}
\lim _{r \to 0} \frac{\sigma^\Pi(t+rT)-\sigma^\Pi(t)}{r^{h(t)}}= Z_t(T)
\end{align}
where $Z_t(T)$, $T>0$ is the so-called local process (or tangent process) at time $t$. A fundamental property of $Z_t (T)$ is  $h(t)$-self-similarity, i.e. $Z_t(rT) \stackrel{\text{d}}{=}r^{h(t)}Z_t(T)$ for $r>0$. In the case where $\sigma^\Pi(t)$ is a multistable process, the local approximation at  a fixed $t>0$ is a stable subordinator with index $\alpha (t)$. By taking the Laplace tranform,
\begin{align}
\lim_{r \to 0} \mathbb{E} e^{-\lambda \frac{\sigma^\Pi(t+rT)-\sigma^\Pi(t)}{r^{h(t)}}}= \, & \lim_{r\to 0} \exp  \ll -\int_t^{t+rT} \l \frac{\lambda}{r^{h(t)}} \r^{\alpha (s)}ds  \rr \notag \\
= \, & \lim _{r\to 0} \, \exp \ll -\frac{r T \lambda^{\alpha (t)}}{r^{h(t)\alpha (t)} }+o(r) \rr \notag \\
= \, &e^{-T \lambda ^{\alpha (t)}}
\end{align}
where the limit produces a non-trivial result by assuming that the similarity index is $h (t)= 1/\alpha (t)$. 

Another way to approximate $\sigma^\Pi(t)$ is now given by means of stable processes. We split the interval $[0,t]$ into $n$ sub-intervals of length $\frac{t}{n}$ and assume $\alpha _i = \alpha \l \frac{t}{n}i\r$. We can write
\begin{align}
\mathbb{E} e^{-\lambda \sigma^\Pi(t)} =  e^{-\int _0^t \lambda ^{\alpha (s)}ds}= \lim _{n \to \infty} e^{-\frac{1}{n}\sum _{i=1}^n \lambda ^{\alpha _i} t }= \lim _{n \to \infty} \prod _{i=1}^n e^{-\frac{t}{n} \lambda ^{\alpha _i}}
\end{align}
and this proves that the following equality holds in distribution 
\begin{align}
\sigma^\Pi(t) \, = \, \lim_{n \to \infty} \sum_{i=1}^n \left[ \sigma^{\alpha_i} \l \frac{i}{n}t \r - \sigma^{\alpha_{i-1}} \l \frac{i-1}{n}t \r\right] \,   = \,  \lim _{n \to \infty } \sum_{i=1}^n \sigma^{\alpha_i}  \l \frac{t}{n} \r 
\end{align}
where $\sigma^{\alpha_i}$, $1 \leq i \leq n$, are independent stable subordinators with index $\alpha _i = \alpha \l \frac{t}{n} i \r $.

To conclude this section on distributional properties, we provide sufficient conditions for the absolute continuity with respect to the Lebesgue measure of the distribution of a non-homogeneous subordinator.
\begin{te}
\label{dafare}
Let $\sigma^{\Pi}(t)$, $t \geq 0$ be a non-homogeneous subordinator with Laplace exponent
\begin{align}
\Pi (\lambda, t) \, = \,   \lambda b(t) + \int_0^\infty \l 1-e^{-\lambda s} \r \phi(ds, t) 
\end{align}
Suppose that $s \mapsto \bar{\nu}(s, t)$ is absolutely continuous on $(0, \infty)$ and furthermore assume $\int_0^t \nu((0, \infty), \tau)d\tau=\infty$ for each $t$. The distribution of $\sigma^\Pi(t)$ is absolutely continuous with respect to the Lebesgue measure.
\end{te}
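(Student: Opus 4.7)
The law $\mu_t^\Pi$ of $\sigma^\Pi(t)$ is infinitely divisible on $[0,\infty)$ with L\'evy--Khintchine triplet $(b(t),\phi(\cdot,t))$, as observed in Remark~\ref{rem23}. The absolute continuity of $s \mapsto \bar\nu(s,t)$ on $(0,\infty)$ gives $\nu(ds,t) = n(s,t)\,ds$ for some density $n(\cdot,t)$, so by Fubini
\begin{equation*}
\phi(ds,t) \; = \; \Bigl(\int_0^t n(s,\tau)\, d\tau\Bigr)\,ds
\end{equation*}
is itself absolutely continuous with respect to Lebesgue measure on $(0,\infty)$, while the hypothesis yields $\phi((0,\infty),t)=\infty$. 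The problem is thus reduced to showing that an infinitely divisible law on $[0,\infty)$ with drift $b(t)$ and an absolutely continuous L\'evy measure of infinite total mass is absolutely continuous.

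My plan is to exhibit $\mu_t^\Pi$ as a total-variation limit of absolutely continuous measures. Since $\phi((0,\infty),t)=\infty$, one can choose pairwise disjoint Borel sets $I_k \subset (0,\infty)$ with $\lambda_k := \phi(I_k,t) \in (0,\infty)$ and $\sum_k \lambda_k = \infty$. Let $X_k$ be the sum of jumps of $\sigma^\Pi(t)$ of size in $I_k$; then $X_k$ is compound Poisson with parameter $\lambda_k$ and absolutely continuous jump distribution $\phi(\cdot,t)|_{I_k}/\lambda_k$, so its law decomposes as $e^{-\lambda_k}\delta_0 + \varrho_k$ with $\varrho_k$ absolutely continuous of total mass $1-e^{-\lambda_k}$. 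Because jumps in disjoint size sets are independent, I can write $\sigma^\Pi(t) = b(t) + X_1 + R_1$ with $R_1$ independent of $X_1$, giving
\begin{equation*}
\mu_t^\Pi \; = \; e^{-\lambda_1}\bigl(\delta_{b(t)}\ast\mu_{R_1}\bigr) \; + \; \bigl(\delta_{b(t)}\ast\varrho_1\ast\mu_{R_1}\bigr),
\end{equation*}
and the second summand is absolutely continuous because convolution with an absolutely continuous measure is absolutely continuous. Applying the same decomposition to $\mu_{b(t)+R_1}$ using $X_2$, and iterating, one obtains after $n$ steps
\begin{equation*}
\mu_t^\Pi \; = \; e^{-(\lambda_1+\cdots+\lambda_n)}\,\pi_n \; + \; A_n,
\end{equation*}
where $\pi_n$ is a probability measure (the law of $b(t)+R_n$, with $R_n$ the sum of jumps whose sizes lie outside $I_1\cup\cdots\cup I_n$) and $A_n$ is absolutely continuous of total mass $1-e^{-(\lambda_1+\cdots+\lambda_n)}$. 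Since $\sum_k\lambda_k=\infty$, the prefactor $e^{-(\lambda_1+\cdots+\lambda_n)}\to 0$, so the residual term vanishes in total variation, $A_n \to \mu_t^\Pi$ in total variation, and $\mu_t^\Pi$ is absolutely continuous as a total-variation limit of absolutely continuous measures.

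The main obstacle is the iteration bookkeeping: at each step one must verify that the newly extracted piece is genuinely absolutely continuous (by the stability of absolute continuity under convolution with an arbitrary probability measure) and that the residual mass shrinks geometrically via the chosen $\lambda_k$. An alternative, less self-contained route would be to invoke the classical absolute continuity criterion for infinitely divisible laws on $\mathbb{R}$ available in \cite{sato}, valid whenever the L\'evy measure is absolutely continuous with infinite total mass; however, the decomposition above gives a direct and transparent argument exploiting exactly the two hypotheses of the theorem.
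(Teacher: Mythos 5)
Your proof is correct and rests on the same core idea as the paper's: isolate the large-jump part of $\sigma^\Pi(t)$ as a compound Poisson variable with absolutely continuous jump distribution and total rate $\Lambda$, observe that its law splits as $e^{-\Lambda}\delta_0$ plus an absolutely continuous piece, and let $\Lambda\to\infty$ using the hypothesis $\int_0^t\nu((0,\infty),\tau)\,d\tau=\infty$. The paper implements this with a single truncation parameter $\gamma$ (the process $Z_\gamma$ of Theorem \ref{convte}) and concludes from the weak convergence $Z_\gamma\stackrel{d}{\to}\sigma^\Pi(t)$ together with the vanishing of the non-absolutely-continuous mass $e^{-\int_0^t\bar\nu(\gamma,\tau)d\tau}$; you instead use a countable partition $\{I_k\}$ of the jump sizes and an explicit convolution identity $\mu_t^\Pi=e^{-\sum_{k\le n}\lambda_k}\pi_n+A_n$, closing the argument by total-variation convergence. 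Your finishing step is in fact the tighter one: weak limits of absolutely continuous measures need not be absolutely continuous, so the convolution bookkeeping you carry out (the singular part of $\mu_t^\Pi$ has mass at most $e^{-\sum_{k\le n}\lambda_k}$ for every $n$) is exactly what makes the limit passage airtight, whereas the paper leaves this step implicit. The only minor point to record is the existence of the sets $I_k$ with $\lambda_k\in(0,\infty)$ and $\sum_k\lambda_k=\infty$, which follows from $\int_0^\infty (x\wedge 1)\,\phi(dx,t)<\infty$ by taking, e.g., $I_k=(1/(k+1),1/k]$ together with $(1,\infty)$.
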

\begin{proof}
Consider the approximating process $Z_\gamma (t)$ with distribution
\begin{align}
p_{Z_\gamma}(dx,t) \,  = \, e^{-\int _0 ^t \bar{\nu}(\gamma, \tau) d \tau} \sum _{n=1}^{\infty} \frac{1}{n!} \phi^{*n}(dx, t) \mathds{1}_{\ll x > \gamma \rr} + e^{-\int_0^t \bar{\nu}(\gamma, \tau) d\tau} \delta_0(dx)
\end{align}
which converges weakly to the law of $\sigma^{\Pi}(t)$ as $\gamma \to 0$ since $Z_\gamma (t) \stackrel{\text{d}}{\longrightarrow} \sigma^\Pi (t)$ as $\gamma \to 0$.
Consider the Lebesgue decomposition of $p_{Z_\gamma}$, written as
\begin{align}
p_{Z_\gamma}(dx, t) \, = \, p_{Z_\gamma}^{\textrm{d}}(dx,t) + p_{Z_\gamma}^{\textrm{s}}  (dx,t) + p_{Z_\gamma}^{\textrm{ac}} (dx,t).
\end{align}
By hypothesis we know that
\begin{align}
p_{Z_\gamma}^{\textrm{d}}(dx,t) \, = \, e^{-\int_0^t \bar{\nu}(\gamma, \tau) d\tau} \delta_0(dx)
\end{align}
since $s \mapsto \bar{\nu}(s, t)$ is absolutely continuous and therefore the L\'evy measure is absolutely continous with respect to the Lebesgue measure.
Therefore by letting $\gamma \to 0$ we observe
\begin{align}
\int_0^\infty \l p_{Z_\gamma}^{\textrm{d}}(dx, t) + p_{Z_\gamma}^{\textrm{s}} (dx, t)\r \, = \,  e^{-\int_0^t \bar{\nu}(\gamma, \tau) d\tau}
\end{align}
which goes to zero as $\gamma \to 0$ since $ \int_0^t \overline{\nu}(0,\tau)d\tau = \infty$ and the continuity of the function $\gamma \mapsto \int_0^t \bar{\nu}(\gamma, \tau) d\tau $ follows from the continuity of $\gamma \mapsto \bar{\nu}(\gamma, t)$ .
\end{proof}

\subsection{The governing equations}
Under the assumptions of the above theorem, we now derive the equation governing the density of a non-homogeneous subordinator. In the following, we will denote as $q(x, t)$ the density of $\sigma^\Pi(t)$, when it exists, i.e.
\begin{align}
\Pr \ll \sigma ^{\Pi}(t) \in dx \rr = q(x,t)dx .
\end{align}
\begin{te}
Let $\sigma^{\Pi}(t)$, $t \geq 0$, be a non-homogeneous subordinator and let the assumptions of Theorem \ref{dafare} hold. Then a Lebesgue density of $\sigma ^{\Pi} (t)$ exists and solves the variable-order equation
\begin{align}
\frac{\partial}{\partial t} q(x, t) \, = \, -b^\prime(t) \frac{\partial}{\partial x}q(x, t) -\frac{\partial}{\partial x} \int_0^x q(s, t) \bar{\nu}(x-s, t)ds, \quad x> b(t), \, t>0,
\label{rl}
\end{align}
provided that $q(x,t)$ is differentiable with respect to $x$, subject to $q(x, 0) dx = \delta_0(dx)$ for $x\geq 0$, and $q(b(t), t) = 0$, for $t >0$.
\end{te}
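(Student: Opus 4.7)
My approach is to verify equation \eqref{rl} by passing to Laplace transforms in the spatial variable $x$, invoking uniqueness of the Laplace transform at the end. Existence of the density $q(x,t)$ is guaranteed by Theorem \ref{dafare}. Writing $\widetilde{q}(\lambda, t) = \int_0^\infty e^{-\lambda x} q(x, t)\,dx = e^{-\Pi(\lambda, t)}$, I differentiate in $t$ using
\begin{align*}
\frac{\partial}{\partial t}\Pi(\lambda, t) \, = \, \lambda b^\prime(t) + \int_0^\infty \bigl( 1-e^{-\lambda s} \bigr)\, \nu(ds, t) \, = \, f(\lambda, t),
\end{align*}
which produces the Laplace-space evolution equation
\begin{align*}
\frac{\partial}{\partial t}\widetilde{q}(\lambda, t) \, = \, -f(\lambda, t)\, \widetilde{q}(\lambda, t).
\end{align*}

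Next, I rewrite $f(\lambda, t)$ in terms of the tail $\bar{\nu}(s, t)$. Since $s \mapsto \bar{\nu}(s, t)$ is absolutely continuous by the hypothesis inherited from Theorem \ref{dafare}, a standard integration by parts (whose boundary terms vanish thanks to \eqref{12} and \eqref{211}) yields
\begin{align*}
\int_0^\infty \bigl(1-e^{-\lambda s}\bigr) \nu(ds, t) \, = \, \lambda \int_0^\infty e^{-\lambda s}\, \bar{\nu}(s, t)\, ds,
\end{align*}
so that $f(\lambda, t) = \lambda b^\prime(t) + \lambda\, \mathcal{L}[\bar{\nu}(\cdot, t)](\lambda)$. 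Substituting back gives
\begin{align*}
\frac{\partial}{\partial t}\widetilde{q}(\lambda, t) \, = \, -\lambda b^\prime(t)\, \widetilde{q}(\lambda, t) - \lambda\, \widetilde{q}(\lambda, t)\, \mathcal{L}\bigl[\bar{\nu}(\cdot, t)\bigr](\lambda).
\end{align*}

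The final step is term-by-term inversion. By the convolution theorem, $\widetilde{q}(\lambda, t)\, \mathcal{L}[\bar{\nu}(\cdot, t)](\lambda)$ is the Laplace transform of $x \mapsto \int_0^x q(s, t) \bar{\nu}(x-s, t)\, ds$; this convolution vanishes at $x = 0$, so multiplying by $\lambda$ corresponds to spatial differentiation without extra boundary contribution. Likewise, because $q(\cdot, t)$ is supported on $[b(t), \infty)$ and the prescribed boundary condition $q(b(t), t) = 0$ holds, an integration by parts gives $\lambda\, \widetilde{q}(\lambda, t) = \mathcal{L}[\partial_x q(\cdot, t)](\lambda)$. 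Inverting each term and appealing to uniqueness of the Laplace transform recovers \eqref{rl}, with the initial condition following from $\widetilde{q}(\lambda, 0) = e^{-\Pi(\lambda, 0)} = 1$. The main technical obstacles are justifying the differentiation under the Laplace integral in $t$ (handled by dominated convergence using continuity of $t \mapsto \nu(ds, t)$ from assumption A1)) and the integration by parts producing the tail representation of $f$; both are standard once the regularity assumptions on $\bar{\nu}$ and $q$ are invoked.
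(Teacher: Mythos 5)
Your proposal is correct and follows essentially the same route as the paper: both arguments work with the spatial Laplace transform, rest on the identity $\int_0^\infty e^{-\lambda s}\bar{\nu}(s,t)\,ds = \lambda^{-1}f(\lambda,t)-b^\prime(t)$ together with the convolution theorem, and use the boundary condition $q(b(t),t)=0$ to kill the extra term from $\mathcal{L}[\partial_x q]$. The only difference is direction — you differentiate $\widetilde{q}=e^{-\Pi(\lambda,t)}$ in $t$ and invert, while the paper transforms the right-hand side of the equation and checks that the resulting solution coincides with $\mathds{E}e^{-\lambda\sigma^\Pi(t)}$ — which is the same computation read backwards.
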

\begin{proof}
 Now we consider the Laplace transform of the right-hand side of equation \eqref{rl} and we get that
\begin{align}
&\mathcal{L} \left[ b^\prime (t) \frac{\partial}{\partial x}q(x, t) + \frac{\partial}{\partial x} \int_0^x q(x, t) \bar{\nu}(x-s, t) ds  \right] (\lambda) \notag \\ = \, &  \lambda b^\prime (t) \widetilde{q}(\lambda, t) - b^\prime (t) q(b(t), t)+\lambda \, \mathcal{L} \left[ q * \bar{\nu} \right] (\lambda) \notag \\
= \, & \lambda b^\prime (t)\widetilde{q}(\lambda, t)+\lambda \widetilde{q} (\lambda, t) \l  \lambda^{-1} f(\lambda, t) -b^\prime (t)\r - b^\prime (t) q(b(t), t)
\end{align}
where we used the fact that
\begin{align}
\int_0^\infty e^{-\lambda s} \bar{\nu}(s, t) ds\, = \, \frac{1}{\lambda} f(\lambda, t) -b^\prime (t).
\end{align}
Therefore the solution to \eqref{rl} has Laplace transform
\begin{align}
\widetilde{q} (\lambda, t) \, = \, e^{-\lambda b (t)  -b(t) q(b(t),t)- \int_0^t  \int_0^\infty \l 1-e^{-\lambda s} \r \nu(ds, w) dw }
\end{align}
which becomes, since $q(b(t), t) = 0$,
\begin{align}
\widetilde{q} (\lambda, t) \, = \, e^{-\lambda b (t)  - \int_0^t  \int_0^\infty \l 1-e^{-\lambda s} \r \nu(ds, w) dw }
\end{align}
and coincides with $\mathds{E} e^{-\lambda \sigma^{\Pi}(t)}$.
\end{proof}
If $\sigma ^{\Pi}(t)$ is a multistable subordinator with index $\alpha (t)$, we have
\begin{align}
\bar{\nu}(x,t)= \frac{x^{-\alpha (t)}}{\Gamma (1-\alpha (t))},
\end{align}
and the governing equation reads
\begin{align}
\label{equazione governante il multistabile}
\frac{\partial}{\partial t}q(x, t) = -\frac{1}{\Gamma (1-\alpha (t))} \frac{\partial }{\partial x}\int _0^x q(y, t)\frac{1}{(x-y)^{\alpha (t)}} dy.
\end{align}
Keeping in mind the definition of the Riemann-Liouville fractional derivative of order  $\alpha \in (0,1)$ 
\begin{align}
\frac{\partial^{\alpha}}{\partial x^{\alpha}}u(x)= \frac{1}{\Gamma (1-\alpha )} \frac{\partial}{\partial x}\int_0^x \frac{u(y)}{(x-y)^{\alpha }} dy, 
\end{align}
we can write (\ref{equazione governante il multistabile}) as
\begin{align}
\frac{\partial}{\partial t} q(x, t) \,= \, - \frac{\partial^{\alpha(t)}}{\partial x^{\alpha (t)}} q(x, t) \qquad 0<\alpha (t)<1, \, x >0,
\end{align}
where  $\frac{\partial^{\alpha (t)}}{\partial x^{\alpha(t)}}$ is the Riemann-Liouville  derivative of time-varying order $\alpha (t)$.
Then, by taking inspiration from \cite[Definition 2.1]{toaldopota}, we define the generalized Riemann-Liouville derivative with kernel $\overline{\nu}(x,t)$ as
\begin{align}
\mathcal{D}^R_x(t)\;  q(x,t)= \frac{\partial}{\partial x}\int _0 ^x q(s,t) \overline{\nu}(x-s,t)ds 
\end{align}
where the operator $\mathcal{D}^R_x(t)$ acts on the variable $x$ but also depends on $t$. Using this notation, we say that the density of a non-homogeneous subordinator solves the following Cauchy problem
\begin{align}
\begin{cases} \frac{\partial}{\partial t} q(x,t)= - \, \mathcal{D}^R_x (t) \; q(x,t), \qquad t>0,  \\
q(x,0)=\delta(x).
\end{cases}
\end{align}
It is useful to define also a generalization of the Caputo fractional derivative as
\begin{align}
\mathcal{D}^C_x (t) \; q(x,t)= \int _0 ^x \frac{\partial}{\partial s} q(s,t) \overline{\nu}(x-s,t)ds.
\end{align}
If $x \mapsto q(x, t)$ is absolutely continuous on $[0, \infty)$ then $\mathcal{D}^C_x(t)$ exists a.e. for all $t \geq 0$, and the following relationship holds
\begin{align} 
\mathcal{D}^R_x (t)\; q(x,t)= q(0,t)\overline{\nu}(x,t)+\mathcal{D}^C_x (t) \;  q(x,t)
\label{Riemann- Caputo}
\end{align}
whose proof can follow \cite[Proposition 2.7]{toaldopota}. Formula \eqref{Riemann- Caputo} is a generalization of the well-known classical relationship between Caputo and Riemann-Liouville derivatives \cite[page 91]{kill}.

\section{The inverse process}
In this section we consider the process
\begin{align}
L^\Pi (t) \, = \, \inf \ll x \geq 0 : \sigma^\Pi (x) > t \rr
\end{align}
where $\sigma^{\Pi}(x)$ is a non-homogeneous subordinator without drift, namely $b^\prime(x)=0$ for all $x$.
We throughout assume that 
\begin{align}
\nu((0, \infty),t)=\infty \textrm{ for all } t \geq 0
\label{a1}
\end{align}
and that 
\begin{align}\label{a2}
s \mapsto \bar{\nu}(s, t) \, = \, \nu((s, \infty),t)  \textrm{ is an absolutely continuous function on} \, (0, \infty).
\end{align}
By using Theorem \ref{tecp} and Remark \ref{remsi} it is clear that the process $L^\Pi$ is well defined as the inverse process of $\sigma^\Pi(t)$. Observe that, a.s., $L^\Pi(\sigma^\Pi (t)) = t$ since $L^\Pi(\sigma^\Pi (t)) = \inf \ll s \geq  0 : \sigma^\Pi(s) >\sigma^\Pi(t) \rr$ and, under \eqref{a1}, the process $\sigma^\Pi (t)$ is strictly increasing on any finite time interval (Proposition \ref{remsi}). In the following, we denote by $x \mapsto l(x, t)$ the Lebesgue density of $L^\Pi(t)$, when such a density exists. The inverse of a classical subordinator has a Lebesgue density (\cite[Theorem 3.1]{meertri}). We provide here an equivalent version of \cite[Theorem 3.1]{meertri} valid for non-homogeneous subordinators.

\begin{te}
\label{leggeinv}
Under the assumptions \eqref{a1} and \eqref{a2} the process $L^\Pi(t)$, $t\geq 0$, has a Lebesgue density which can be written as
\begin{align} \label{legge dell'inverso}
x \mapsto l (x, t) \, = \, \int_0^t q (s,x) \bar{\nu}(t-s,x)ds, \text{ for all } t > 0.
\end{align}
\end{te}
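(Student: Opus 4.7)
The plan is to exploit the duality between $L^\Pi$ and $\sigma^\Pi$ together with the governing equation from the previous section. Under assumption \eqref{a1}, Proposition \ref{remsi} guarantees that $\sigma^\Pi$ is a.s.\ strictly increasing on every finite interval, so
\begin{align*}
\{L^\Pi(t) > x\} \, = \, \{\sigma^\Pi(x) < t\}
\end{align*}
up to a null set. By Theorem \ref{dafare}, since $\nu((0,\infty),t) = \infty$ and $s \mapsto \bar{\nu}(s,t)$ is absolutely continuous, the law of $\sigma^\Pi(x)$ is absolutely continuous with density $s \mapsto q(s,x)$. Hence
\begin{align*}
\Pr\{L^\Pi(t) > x\} \, = \, \int_0^t q(s,x) \, ds.
\end{align*}

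The next step is to differentiate this tail function with respect to $x$: formally,
\begin{align*}
l(x,t) \, = \, -\frac{\partial}{\partial x} \int_0^t q(s,x) \, ds \, = \, -\int_0^t \frac{\partial}{\partial x} q(s,x) \, ds,
\end{align*}
where the interchange of differentiation and integration is justified by the smoothness already assumed in Theorem 2.8. Since we are in the driftless case $b'(x) = 0$, the governing equation from Theorem 2.8 reads
\begin{align*}
\frac{\partial}{\partial x} q(s,x) \, = \, -\frac{\partial}{\partial s} \int_0^s q(u,x) \bar{\nu}(s-u, x) \, du.
\end{align*}
Substituting this into the previous display and applying the fundamental theorem of calculus in the variable $s$, the lower boundary term at $s=0$ vanishes (the inner convolution is zero at $s=0$), and one is left with
\begin{align*}
l(x,t) \, = \, \int_0^t \frac{\partial}{\partial s} \int_0^s q(u,x) \bar{\nu}(s-u, x) \, du \, ds \, = \, \int_0^t q(u,x) \bar{\nu}(t-u, x) \, du,
\end{align*}
which is exactly \eqref{legge dell'inverso}.

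The main obstacle is ensuring that the candidate $l(x,t)$ obtained in this formal way is genuinely a Lebesgue density, rather than merely a distributional derivative. To close this gap, I would verify directly that $x \mapsto \int_0^t q(u,x) \bar{\nu}(t-u,x)\,du$ is non-negative and locally integrable, and then check by Fubini that integrating it from $0$ to $x$ recovers $1 - \int_0^t q(s,x)\,ds = \Pr\{L^\Pi(t) \le x\}$; this identification, together with the continuity properties of $\bar{\nu}(\cdot,\cdot)$ and $q(\cdot,\cdot)$ inherited from assumptions A1)--A2) and \eqref{a2}, promotes the formal derivative to an actual density and completes the proof.
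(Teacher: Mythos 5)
Your starting point --- the duality $\Pr\{L^\Pi(t)>x\}=\Pr\{\sigma^\Pi(x)<t\}=\int_0^t q(s,x)\,ds$ --- is exactly the paper's, but from there the two arguments diverge, and yours has a genuine gap. You differentiate the tail in $x$, push the derivative inside the integral, and substitute the forward governing equation \eqref{rl}. Every one of these steps requires pointwise differentiability of $q$ in the subordinator-time variable, and \eqref{rl} is itself only asserted in the paper ``provided that $q(x,t)$ is differentiable with respect to $x$''. None of this regularity is available under the hypotheses of the present theorem: assumptions \eqref{a1} and \eqref{a2} together with Theorem \ref{dafare} give only that $\sigma^\Pi(x)$ has an $L^1$ density, not that this density is differentiable, or even continuous, in either argument. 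So as written your argument establishes the identity only under additional unstated smoothness hypotheses on $q$, which is strictly weaker than the stated theorem.

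Your closing paragraph contains the right repair --- verify directly that $x\mapsto\int_0^t q(u,x)\bar{\nu}(t-u,x)\,du$ integrates in $x$ to $\Pr\{L^\Pi(t)\le x\}$ --- but you leave it as a one-line ``check by Fubini'', and it is not a Fubini computation: the iterated integrals do not match up termwise. The paper executes precisely this identification by Laplace transforming in $t$: since $l(x,\cdot)$ is the convolution of $q(\cdot,x)$ with $\bar{\nu}(\cdot,x)$, one has $\widetilde{l}(x,\lambda)=e^{-\Pi(\lambda,x)}f(\lambda,x)/\lambda$, and then $\int_0^z\widetilde{l}(x,\lambda)\,dx=\lambda^{-1}\left(1-e^{-\Pi(\lambda,z)}\right)$ because $\partial_x e^{-\Pi(\lambda,x)}=-f(\lambda,x)e^{-\Pi(\lambda,x)}$; this equals the Laplace transform of $t\mapsto\Pr\{L^\Pi(t)\le z\}$ by the same duality you use. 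Uniqueness of Laplace transforms gives equality for a.e.\ $t$, and the remaining work (which occupies most of the paper's proof) is to show both sides are continuous in $t$ --- via Proposition \ref{remsi} and the a.s.\ continuity of $L^\Pi$ for the distribution function, and via dominated convergence, using \eqref{a2} and the integrability of the tail, for the candidate. That route needs no differentiability of $q$ at all. If you replace your differentiation argument by this transform computation, your proof becomes the paper's.
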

\begin{proof}
Define
\begin{align}
L(z, t) \, = \, \int_0^z l(x, t) dx
\end{align}
and 
\begin{align}
R(z, t) \, = \, \Pr \ll L^\Pi(t) \leq z  \rr .
\end{align}
We will show that $L(z, t) = R(z, t)$.
By using the convolution theorem for the Laplace transform we have that
\begin{align}
\widetilde{L}(z, \lambda) \, = \, \frac{1}{\lambda}-\frac{1}{\lambda}e^{-\Pi(\lambda, z)}.
\end{align}
The use of the relationship
\begin{align}
\Pr \ll L^\Pi (t) > x \rr \, = \, \Pr \ll \sigma^\Pi (x) <t \rr 
\end{align}
leads to the Laplace transform
\begin{align}
\int_0^\infty e^{-\lambda t} R(z, t) dt \, = \, &\int_0^\infty e^{-\lambda t}\l 1- \Pr \ll \sigma^\Pi (x) < t \rr \r dt \notag \\
= \, & \frac{1}{\lambda} -\frac{1}{\lambda} e^{-\Pi (\lambda, x)}.
\end{align}
Therefore we have proved that
\begin{align}
\int_0^\infty e^{-\lambda t} L(z, t)  dt \, = \, \int_0^\infty e^{-\lambda t} R(z, t) dt
\end{align}
and thus in any point of continuity it is true that
\begin{align}
R(z, t) \, = \, L(z, t).
\end{align}
If we prove that $t \mapsto R(z, t)$ and $t \mapsto L(z,t)$ are continuous functions then we have proved the Theorem for all $t$. Note that under \eqref{a1} the process $\sigma^\Pi (t)$ is strictly increasing on any finite interval in view of Proposition \ref{remsi}. Therefore the process $L^\Pi(t)$ is a.s. continuous and therefore it is also continuous in distribution. This implies that $t \mapsto R(z, t)$ is continuous. Now we show that $t \mapsto L(z, t)$ is continuous. Note that, for $h>0$,
\begin{align}
& \left| l(x, t+h)-l(x,t) \right| \notag \\ 
= \,& \left| \int_0^{t+h} q(s, x) \bar{\nu}(t+h-s,x) ds- \int_0^t q(s, x) \bar{\nu}(t-s,x) ds \right| \notag \\
= \, & \left|  \int_0^t q(s,x) \l \bar{\nu}(t+h-s,x) - \bar{\nu}(t-s,x) \r ds+ \int_t^{t+h} q(s, x) \bar{\nu}(t+h-s,x) ds \right| \notag \\
\leq \, & \int_0^t q(s, x) \left| \bar{\nu}(t+h-s,x) - \bar{\nu}(t-s,x) \right| ds + \int_t^{t+h} q(s, x) \bar{\nu}(t+h-s,x)ds \notag \\
= \, &  \int_0^t q(s, x) \l \bar{\nu}(t-s,x) -\bar{\nu}(t+h-s,x)\r ds + \int_t^{t+h} q(s, x) \bar{\nu}(t+h-s,x)ds.
\label{312}
\end{align}
Since under \eqref{a2} the function $s \mapsto \bar{\nu}(s,\puntomio)$ is absolutely continuous and since
\begin{align}
\bar{\nu}(t-s, x) - \bar{\nu}(t-s+h,x) \, \leq \, \bar{\nu}(t-s, x)
\end{align}
and
\begin{align}
\int_0^t \bar{\nu}(s, x)ds < \infty,
\end{align}
the first integral in \eqref{312} goes to zero by an application of the dominated convergence theorem. The second integral is for any $\infty>z>t$ and sufficiently small $h$
\begin{align}
\int_t^{t+h} q(s, x) \bar{\nu}(t+h-s,x) ds \, = \, \int_t^z q(s, x) \mathds{1}_{\l t, t+h \r}(s) \bar{\nu}(t+h-s, x) ds.
\label{secint}
\end{align}
Now since 
\begin{align}
q(s, x) \mathds{1}_{\l t, t+h \r}(s) \bar{\nu}(t+h-s, x) ds \,  \leq \, q(s, x) \mathds{1}_{\l t, z \r}(s) \bar{\nu}(t-s, x) ds
\end{align}
and 
\begin{align}
\int_t^z q(s, x) \mathds{1}_{\l t, z \r}(s) \bar{\nu}(t-s, x) ds < \infty
\end{align}
another application of the dominated convergence theorem shows that the second integral in \eqref{312} goes to zero. For $h<0$ the arguments are similar. This completes the proof.
\end{proof}

\begin{te}
\label{32te}
If $x \mapsto \bar{\nu}(t, x)$ is differentiable and if the density $x \mapsto l(x, t)$ is differentiable then $l(x, t)$ solves the equation
\begin{align}
\frac{\partial }{\partial x}l(x,t)= \delta (x) \overline{\nu}(t,x) - \mathcal{D}^R_t (x)\; l(x,t)-B_{t,x}l(x,t), \qquad x \geq 0,
\end{align}
in the sense of distributions, namely it solves pointwise the Cauchy problem
\begin{align} \label{equazione inverso}
\begin{cases}
\frac{\partial }{\partial x}l(x,t)=  - \mathcal{D}^R_t (x)\; l(x,t)-B_{t,x}l(x,t) \qquad x>0\\
l(0, t) \, = \, \bar{\nu}(t, 0)
\end{cases}
\end{align}
where $\mathcal{D}^R_t(x)$ is the generalized Riemann-Liouville derivative acting on $t$ and depending on $x \geq 0$, and $B_{t,x}$ is an operator acting on both $t$ and $x$ defined as 
\begin{align}
B_{t,x} \, l(x,t)= \int _0 ^t ds \frac{\partial}{\partial x} \overline{\nu}(t-s,x) \frac{\partial}{\partial s} \int _0 ^x l(x',s)dx^\prime.
\end{align}
\end{te}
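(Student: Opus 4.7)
The plan is to verify the PDE by Laplace transforming in the time variable $t$ and checking the identity at the Laplace level. The starting point is the convolution representation $l(x,t) = \int_0^t q(s,x)\,\bar{\nu}(t-s,x)\,ds$ from Theorem \ref{leggeinv}. Since $b'\equiv 0$, the Laplace transform of $s\mapsto\bar{\nu}(s,x)$ is $f(\lambda,x)/\lambda$ and that of $s\mapsto q(s,x)$ is $e^{-\int_0^x f(\lambda,w)dw}$, so the convolution rule gives
\begin{equation*}
\widetilde{l}(x,\lambda) \, = \, \frac{f(\lambda,x)}{\lambda}\,e^{-\int_0^x f(\lambda,w)dw}.
\end{equation*}
Differentiating this explicit expression with respect to $x$ yields
\begin{equation*}
\frac{\partial}{\partial x}\widetilde{l}(x,\lambda) \, = \, -f(\lambda,x)\widetilde{l}(x,\lambda) + \frac{1}{\lambda}\frac{\partial f(\lambda,x)}{\partial x}\,e^{-\int_0^x f(\lambda,w)dw},
\end{equation*}
which will be my target at the Laplace level.

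Next I Laplace-transform each term on the right-hand side of the proposed equation. The operator $\mathcal{D}^R_t(x)l(x,t) = \partial_t\int_0^t l(x,s)\bar{\nu}(t-s,x)ds$ is a $t$-convolution followed by a $t$-derivative, and since $l(x,0)=0$ its Laplace is $f(\lambda,x)\widetilde{l}(x,\lambda)$. For $B_{t,x}l(x,t)$ the decisive observation is the duality identity $\int_0^x l(x',s)dx' = \Pr\{L^\Pi(s) \le x\} = \Pr\{\sigma^\Pi(x) \ge s\} = 1 - \int_0^s q(u,x)du$, valid because $\sigma^\Pi$ is a.s.\ strictly increasing by Proposition \ref{remsi} under assumption \eqref{a1}. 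Hence $\partial_s\int_0^x l(x',s)dx' = -q(s,x)$, and therefore $B_{t,x}l(x,t) = -\int_0^t q(s,x)\,\partial_x\bar{\nu}(t-s,x)\,ds$, which is another $t$-convolution whose Laplace transform is $-\widetilde{q}(x,\lambda)\cdot\tfrac{1}{\lambda}\partial_x f(\lambda,x)$. Summing the Laplace transforms of $-\mathcal{D}^R_t(x)l$ and $-B_{t,x}l$ reproduces exactly $\partial_x\widetilde{l}(x,\lambda)$, and uniqueness of Laplace inversion delivers the pointwise PDE for $x>0$.

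For the initial condition, plugging $x=0$ into $\widetilde{l}(x,\lambda)$ gives $\widetilde{l}(0,\lambda) = f(\lambda,0)/\lambda = \widetilde{\bar{\nu}(\cdot,0)}(\lambda)$, so $l(0,t) = \bar{\nu}(t,0)$. The distributional version in the statement is then a direct consequence: extending $l$ by zero to $x<0$, the jump at the origin contributes a Dirac mass $l(0,t)\delta(x) = \bar{\nu}(t,0)\delta(x)$ to $\partial_x l$, which is precisely the extra term in the first display of the theorem.

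The main obstacle I anticipate is the handling of $B_{t,x}$, whose definition mixes a $t$-convolution against $\partial_x\bar{\nu}$ with an $s$-derivative of the cumulative distribution in $x$, so it is not visibly compatible with the Riemann--Liouville type term. The trick that unlocks the argument is the inverse duality $\partial_s\int_0^x l(x',s)dx' = -q(s,x)$, which collapses $B_{t,x}l$ into a plain $t$-convolution and produces exactly the $\partial_x f(\lambda,x)/\lambda$ factor needed to cancel the term coming from differentiating $f(\lambda,x)$ in $\widetilde{l}$. The remaining work is routine: Leibniz differentiation under the integral sign and interchange of integration order, all justified by the absolute continuity assumption \eqref{a2} and the differentiability hypotheses in the statement.
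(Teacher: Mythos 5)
Your proof is correct, but it follows a genuinely different route from the paper's. The paper works entirely in the time domain: it differentiates the representation \eqref{legge dell'inverso} with respect to $x$, converts the resulting term $\int_0^t \partial_x q(s,x)\,\bar{\nu}(t-s,x)\,ds$ into $-\mathcal{D}^C_t(x)\,l(x,t)$ via the duality \eqref{equazione diretto-inverso}, identifies the second term with $-B_{t,x}l$ through the same identity $\partial_s\int_0^x l(x',s)dx' = -q(s,x)$ that you use, and finally passes from the Caputo to the Riemann--Liouville form via \eqref{Riemann- Caputo} with $l(x,0)=\delta(x)$, which is where the paper's term $\delta(x)\bar{\nu}(t,x)$ comes from. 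You instead verify the identity at the Laplace level, exploiting the closed form $\widetilde{l}(x,\lambda)=\lambda^{-1}f(\lambda,x)e^{-\int_0^x f(\lambda,w)dw}$; the cancellation between the $\partial_x f(\lambda,x)$ term produced by differentiating $\widetilde{l}$ and the transform of $B_{t,x}l$ is the Laplace-domain shadow of the paper's computation. Your reading of the singular term as the jump $l(0,t)\delta(x)$ of the zero-extension of $l$ differs from the paper's (which obtains it as the initial-value term of the Riemann--Liouville derivative), but the two distributions coincide, $\bar{\nu}(t,0)\delta(x)=\delta(x)\bar{\nu}(t,x)$, precisely because of the boundary condition $l(0,t)=\bar{\nu}(t,0)$. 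What the Laplace route buys is a transparent derivation of that boundary condition and a purely algebraic verification; what it costs is the need to justify interchanging $\partial_x$ with the Laplace integral and to invoke injectivity of the Laplace transform (hence continuity in $t$ of both sides), whereas the paper's argument is a direct pointwise manipulation at essentially the same level of rigour.
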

\begin{proof}
We can adapt \cite[Theorem 8.4.1]{kolokoltsov} to our case. It is sufficient to derive both sides of (\ref{legge dell'inverso}) and apply
\begin{align} \label{equazione diretto-inverso}
\frac{\partial}{\partial t}l(x,t)=- \frac{\partial}{\partial x}q(t,x),
\end{align} 
to obtain
\begin{align}
\frac{\partial}{\partial x} l (x, t) \, & = \, \int_0^t  \frac{\partial}{\partial x}  q(s,x) \bar{\nu}(t-s,x)ds +   \int_0^t q(s,x) \frac{\partial}{\partial x} \bar{\nu}(t-s,x)ds \notag \\
& = - \, \int_0^t  \frac{\partial}{\partial s}  l(x,s) \bar{\nu}(t-s,x)ds - \int _0 ^t ds \frac{\partial}{\partial x} \overline{\nu}(t-s,x) \frac{\partial}{\partial s} \int _0 ^x l(x',s)dx' \notag \\
&=- \mathcal{D}^C_t (x)\; l(x,t)- \int _0 ^t ds \frac{\partial}{\partial x} \overline{\nu}(t-s,x) \frac{\partial}{\partial s} \int _0 ^x l(x',s)dx' \notag \\
& = \delta (x) \overline{\nu}  (t,x) - \mathcal{D}^R_t (x)\; l(x,t)- \int _0 ^t ds \frac{\partial}{\partial x} \overline{\nu}(t-s,x) \frac{\partial}{\partial s} \int _0 ^x l(x',s)dx' \notag \\
\end{align}
where in the last step we referred to (\ref{Riemann- Caputo}).
\end{proof}

\begin{os} \normalfont
Non stationarity is here expressed by the term $B_{t,x}l(x,t)$,  which vanishes in the case of the inverse of a classical subordinator, and by the fact that the kernel of $\mathcal{D}^R_t(x)$ depends on $x\geq 0$.
\end{os}
In the case of the inverse of a classical stable subordinator, Theorem \ref{32te} obviously leads to the well-known Cauchy problem \cite[eq (5.7)]{meerstra}
\begin{align}
\begin{cases}
\frac{\partial}{\partial x} l(x, t) \, = \, - \frac{\partial^\alpha}{\partial t^\alpha}l(x, t), \qquad x >0, t>0 \\
l(0, t) \, = \, \frac{t^{-\alpha}}{\Gamma (1-\alpha)}, 
\end{cases}
\end{align}
for $\alpha \in (0,1)$.

We remark that a first study on time fractional equations with state-dependent index appears in \cite{garra} where the following equation is considered
\begin{align}
\frac{d^{\alpha(k)}}{d t^{\alpha(k)}} p_k(t) \, = \, -\theta \l p_k(t) - p_{k-1}(t) \r.
\end{align}
The fractional derivative is meant in the Dzerbayshan-Caputo sense and the second member arises by writing the forward equations with the adjoint of the generator of the Poisson semigroup.

\subsection{Time changed Markov processes via the inverse  of non-homogeneous subordinators.}

We now consider the composition of a Markov process with the inverse of a non-homogeneous subordinator. Let $X(u)$, $u>0$ be a Markov process in $\mathbb{R}^d$ such that $X(0)=y$ a.s. and 
\begin{align}
\Pr \{ X(u) \in dx \} = p(x,y,u) dx.
\end{align} 
We assume that $p(x,y,u)$ is a smooth probability density satisfying the following Cauchy problem:
\begin{align} 
\begin{cases}\frac{\partial }{\partial u}p= S_x p, \qquad u>0, \\
p(x,y,0)=\delta (x-y),
\label{equazione processo markoviano}
\end{cases}
\end{align}
where $S_x$ is the adjoint of the Markovian generator acting on the variable $x$.
 Moreover let $L^{\Pi}(t)$, $t\geq 0$, be the inverse of a non-homogeneous subordinator, with density as in Theorem \ref{leggeinv}
 \begin{align}
 \Pr \{ L^{\Pi}(t) \in dx\}= l(x,t)dx.
 \end{align}
 
By assuming that $X(t)$ and $L^{\Pi}(t)$ are independent, we study the composition $X(L^{\Pi}(t))$, having distribution
 \begin{align}
  \Pr \{ X(L^{\Pi}(t)) \in dx \} = \int_0^{\infty} \Pr \{ X (u) \in dx \} \Pr \{ L^{\Pi}(t) \in du \}.
 \end{align}
Then $X(L^{\Pi}(t))$ has a smooth density, defined as
\begin{align}
g(x,y,t)=\int _0^{\infty} p(x,y,u) l(u,t) du. 
\label{densita gg}
\end{align}
By using simple arguments, we now derive the governing equation for (\ref{densita gg}).

\begin{prop}
Under the above assumptions, the density (\ref{densita gg}), for $t \geq 0$, solves the following equation in the sense of distributions:
\begin{align}
\int_0^\infty \mathcal{D}^R_t (u) \, \left[ p(x, y, u) \, l(u, t) \right] du \, = \, & \delta (y-x) \overline{\nu}(t,0)+S_xg(x,y,t) \notag \\
& -\int _0^{\infty} p(x,y,u) B_{t,u}\, l(u,t)du.
\end{align}
\end{prop}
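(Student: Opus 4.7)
The plan is to combine the governing equation for the inverse density $l(u,t)$ from Theorem \ref{32te} with the Kolmogorov-type equation \eqref{equazione processo markoviano} for $p(x,y,u)$, via multiplication and integration over $u$. Specifically, I start from
\begin{align*}
\frac{\partial}{\partial u} l(u,t) \, = \, -\mathcal{D}^R_t(u) \, l(u,t) - B_{t,u} \, l(u,t), \qquad u > 0,
\end{align*}
subject to $l(0,t) = \bar{\nu}(t,0)$, multiply both sides by $p(x,y,u)$, and integrate with respect to $u$ over $(0, \infty)$. This produces, on the right-hand side, precisely the two convolution-type terms $-\int_0^\infty p(x,y,u)\,\mathcal{D}^R_t(u) l(u,t)\, du$ and $-\int_0^\infty p(x,y,u)\,B_{t,u} l(u,t)\, du$ that appear in the claim.

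The left-hand side I would handle with a single integration by parts in $u$:
\begin{align*}
\int_0^\infty p(x,y,u) \, \frac{\partial}{\partial u} l(u,t) \, du \, = \, \bigl[ p(x,y,u) l(u,t) \bigr]_0^\infty - \int_0^\infty \frac{\partial}{\partial u} p(x,y,u) \, l(u,t) \, du.
\end{align*}
The boundary contribution at $u=0$ evaluates to $-p(x,y,0)\, l(0,t) = -\delta(y-x)\,\bar{\nu}(t,0)$ (using the initial condition in \eqref{equazione processo markoviano} and the boundary value from Theorem \ref{32te}), while the boundary term at $u = \infty$ vanishes under the standing assumption that $p$ is a smooth probability density and $l(u,t)$ decays. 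Substituting $\partial_u p = S_x p$ and interchanging $S_x$ with the $u$-integral (legitimate because $S_x$ acts on $x$ and the integrand is smooth in both variables), the remaining integral becomes exactly $-S_x g(x,y,t)$. Rearranging terms yields the claimed identity, after observing that $\mathcal{D}^R_t(u)$ acts as an integro-differential operator in the variable $t$, so the factor $p(x,y,u)$ passes freely inside it, allowing the rewriting
\begin{align*}
\int_0^\infty p(x,y,u) \, \mathcal{D}^R_t(u) \, l(u,t) \, du \, = \, \int_0^\infty \mathcal{D}^R_t(u) \bigl[ p(x,y,u) \, l(u,t) \bigr] du.
\end{align*}

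The main obstacle I anticipate is the rigorous handling of the boundary term at $u=0$: since $p(x,y,0) = \delta(x-y)$ is a distribution, the product $p(x,y,0)l(0,t)$ must be interpreted in the distributional sense on $x$, which is precisely why the statement asserts the equation holds only in the sense of distributions. A secondary technical point is justifying Fubini-type exchanges (between $S_x$ and $\int_0^\infty du$, and between $\mathcal{D}^R_t(u)$ and $\int_0^\infty du$), both of which should follow from the smoothness assumptions on $p$ and the integrability implicit in Theorem \ref{leggeinv}.
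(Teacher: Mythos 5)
Your proposal is correct and follows essentially the same route as the paper's own proof: both arguments rest on substituting the governing equation of Theorem \ref{32te} for $l(u,t)$, integrating by parts in $u$ (the paper regularizes with $\lim_{\epsilon\to 0}\int_\epsilon^\infty$, which is your boundary evaluation at $u=0$ giving $p(x,y,0)\,l(0,t)=\delta(y-x)\,\overline{\nu}(t,0)$), using $\partial_u p = S_x p$ to produce $S_x g$, and commuting $p(x,y,u)$ past $\mathcal{D}^R_t(u)$ since that operator acts only on $t$. The only difference is organizational — you read the chain of equalities in the reverse order — so nothing of substance changes.
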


\begin{proof}
We have
\begin{align}
\int_0^\infty \mathcal{D}^R_t (u) \;  \left[ p(x, y, u) \, l(u, t) \right] du = \int _0^{\infty} p(x,y,u)\mathcal{D}^R_t (u)\; l(u,t)du
\end{align}
and by using (\ref{equazione inverso}) and (\ref{equazione processo markoviano}), which hold for positive times, we can write
\begin{align}
 & \int_0^\infty \mathcal{D}^R_t (u) \; \left[ p(x, y, u) \, l(u, t) \right] du \notag \\   = \, & -\lim _{\epsilon \to 0} \int _{\epsilon}^{\infty} p(x,y,u)\frac{\partial}{\partial u}l(u,t) du- \lim _{\epsilon \to 0} \int _{\epsilon} ^{\infty} p(x,y,u) B_{t,u}l(u,t)du \notag \\
=  \, & -\lim _{\epsilon \to 0}\, [p(x,y,u)l(u,t)]_{\epsilon}^{\infty}+\lim _{\epsilon \to 0} \int _{\epsilon}^{\infty}\frac{\partial }{\partial u}p(x,y,u) l(u,t)du \notag \\ & \qquad - \lim _{\epsilon \to 0} \int _{\epsilon} ^{\infty} p(x,y,u) B_{t,u}l(u,t)du\notag\\
 = \, & \delta (y-x) \overline{\nu}(t,0)+\lim _{\epsilon \to 0} \int _{\epsilon}^{\infty}S_xp(x,y,u) \, l(u,t)du -\int _0^{\infty} p(x,y,u) B_{t,u} l(u,t)du\notag\\
 = \, & \delta (y-x) \overline{\nu}(t,0)+S_xg(x,y,t) -\int _0^{\infty} p(x,y,u) \,B_{t,u}\, l(u,t)du
\end{align}
and the proof is complete.
\end{proof}

\begin{os} \normalfont
In the case $X(t)$ is a Brownian motion starting from $y$ and $L^{\Pi}(t)$ is the inverse of a multistable subordinator with index $\alpha (x) \in (0,1)$ we have $\overline{\nu}(t,x)=\frac{t^{-\alpha (x)}}{\Gamma (1-\alpha (x))}$ and $\mathcal{D}^R_t(x)= \frac{\partial ^{\alpha (x)}}{\partial t ^{\alpha (x)}}$, 
and thus the governing equation reads
\begin{align}\label{eq diff fraz}
\int_0^\infty \frac{\partial^{\alpha (u)}}{\partial t^{\alpha (u)} } \, \left[ p(x, y, u) \, l(u, t) \right] \, du \, = \, & \frac{1}{2}\Delta _x g(x,y,t) +\delta (y-x)\frac{t^{-\alpha_0}}{\Gamma (1-\alpha _0)} \notag \\ & - \int _0^{\infty} \frac{1}{\sqrt{2\pi u}} e^{-\frac{(y-x)^2}{2u}}B_{t,u}\, l (u,t)\, du \qquad x\geq 0
\end{align}
where $\alpha (0)= \alpha _0$ and 
\begin{align}
B_{t,u}\, l(x, t)&=  \int _0^t ds \biggl [ \frac{\partial }{\partial u} \overline{\nu} (t-s,u)\frac{\partial}{\partial s}\int _0 ^{u} l(w,s)dw \biggr ] \notag\\
&=   \int _0^t ds  \biggl [\frac{\partial }{\partial u} \frac{(t-s)^{-\alpha (u)}}{\Gamma (1-\alpha (u))}\frac{\partial}{\partial s}\int _0 ^{u} l(w,s)dw \biggr ]
\end{align}
Note that (\ref{eq diff fraz}) is a generalization of the well-known fractional diffusion equation to which it reduces when $u \mapsto \alpha (u)$ is constant, that is
\begin{align}
\frac{\partial^\alpha}{\partial t^\alpha}g \, -\frac{t^{-\alpha}}{\Gamma(1-\alpha)} \delta (x-y) \, = \, \Delta_x g 
\end{align} 
\end{os}

\begin{os} \normalfont
Consider the case where the Markov process is a deterministic time, namely the starting point is $y=0$ and $X(t)=t$. In this case we have $S_x= -\frac{\partial }{\partial x}$ so that the governing equation becomes, for $x \geq 0$,
\begin{align}
\int_0^\infty \mathcal{D}^R_t (u) \, \left[ p(x, y, u) \, l(u, t)\right] du \,  = \delta (x) \overline{\nu}(t,0)-\frac{\partial}{\partial x} g -\int _0^{\infty} \delta (x-u) B_{t,u}\, l(u,t)du
\end{align}
and obviously coincides with that of $L^\Pi$ since the probability density of $X(u)$ is $p(x,0,u)= \delta (x-u)$.
\end{os}

\section{Non-homogeneous Bochner subordination}
We consider in this section a generalization of the Bochner subordination. We recall here some basic facts.
Let $T_t$ be a $C_0$-semigroup of operators (the reader can consult \cite{kolokoltsov} for classical information on this topic) i.e. a family of linear operators on a Banach space $\l \mathfrak{B}, \left\| \puntomio \right\|_{\mathfrak{B}} \r$ such that, for all $u \in \mathfrak{B}$,
\begin{enumerate}
\item $T_0u = u$
\item $T_t T_s u = T_{t+s}u$, $s, t \geq 0$,
\item $\lim_{t \to 0}\left\| T_t u - u \right\|_{\mathfrak{B}} = 0$.
\end{enumerate}
Let $\l A, \textrm{Dom}(A) \r$ be the generator of $T_t$, i.e. the operator
\begin{align}
Au:= \,  \lim_{t \to 0} \frac{T_tu-u}{t}
\end{align}
defined on
\begin{align}
\textrm{Dom} (A) \, = \, \ll u \in \mathfrak{B}: \lim_{t \to 0} \frac{T_tu-u}{t} \textrm{ exists as strong limit} \rr
\end{align}
and let $\left\| T_tu  \right\|_{\mathfrak{B}} \leq \left\| u \right\|_{\mathfrak{B}}$.

Let $\mu_t(\puntomio)$ be a convolution semigroup of sub-probability measures associated with a subordinator, i.e. a family of measures $\ll \mu_t \rr_{t \geq 0}$ satisfying
\begin{enumerate}
\item $\mu_t(0, \infty) \leq 1$, for all $t \geq 0$,
\item $\mu_t  * \mu_s = \mu_{t+s}$, 
\item  $\lim_{t \to 0} \mu_t = \delta_0$ vaguely,
\end{enumerate}
and such that
\begin{align}
\mathcal{L}\left[ \mu_t \right] (\lambda) \, = \, e^{-tf(\lambda)},
\end{align}
where
\begin{align}
f(\lambda) \, = \, a + b\lambda + \int_0^\infty \l 1-e^{-\lambda s} \r \nu(ds)
\end{align}
is a Bern\v{s}tein function.
The operator defined by the Bochner integral
\begin{align}
T_t^fu \, = \, \int_0^\infty T_su \, \mu_t(ds), \qquad u \in \mathfrak{B},
\end{align}
is said to be a subordinate semigroup in the sense of Bochner. A classical result due to \citet{phillips} states that $T_t^f$ is again a $C_0$-semigroup and is generated by
\begin{align}
-f(-A)u:= -au +bAu + \int_0^\infty \l T_s u -u \r \nu(ds)
\end{align}
which is always defined at least on $\textrm{Dom}(A)$ \cite[Theorem 12.6]{librobern}.

In order to extend such a result to non-homogeneous evolutions, a generalization of the notion of one-parameter semigroup is needed. 
Let $(\mathfrak{B},\left\|.\right\|_{\mathfrak{B}})$ be a Banach space. A family of mappings $\mathcal{T}_{s,t}$ from $\mathfrak{B}$ to itself, defined by the pair of numbers $s$ and $t$ (such that $0\leq s \leq t$), is said to be a propagator (two-parameter semigroup) if for each $u \in \mathfrak{B}$, \cite[Section 1.9]{kolokoltsov}
\begin{enumerate}
\item $\mathcal{T}_{t,t}u=u$, for each $t\geq 0$;
\item $\mathcal{T}_{s,t}\mathcal{T}_{r,s}u= \mathcal{T}_{r,t}u$,  for $r\leq s \leq t$;
\item $\lim _{\delta \to 0} \left\|\mathcal{T}_{s+\delta,t}u- \mathcal{T}_{s,t}u\right\|_{\mathfrak{B}} \, = \, \lim _{\delta \to 0}   \left\|\mathcal{T}_{s,t+\delta}u- \mathcal{T}_{s,t}u\right\|_{\mathfrak{B}}= 0  $;
\end{enumerate}
It is obvious that a propagator $\mathcal{T}_{s,t}$ reduces to a classical one-parameter semigroup in the case where it only depends on the difference $t-s$.

Let $\sigma^\Pi (t)$, $t\geq 0$,  be a non-homogeneous subordinator and consider the measures $\mu _{s,t}(.)$ corresponding to the distribution of the increments $\sigma^\Pi(t) - \sigma^\Pi(s)$ which are obviously such that
\begin{align}
\mathcal{L}[\mu _{s,t}](\lambda)= e^{-\int _s^tf(\lambda, \tau)d\tau}
\label{laplst}
\end{align}
as can be ascertained by applying the Campbell theorem to $\sigma^\Pi (t) - \sigma^\Pi (s)$ under the assumption \eqref{211}. Therefore, it is easy to verify that the family of measures $\ll \mu_{s, t} (\puntomio) \rr_{0 \leq s \leq t}$ forms a two-parameter convolution semigroup of probability measures since, from the independence of the increments and \eqref{laplst}, we get $\mu_{s, t} * \mu_{r, s} = \mu_{r, t}$, $r \leq s \leq t$. Consider the operator defined by the Bochner integral on $\mathfrak{B}$
\begin{align}
\mathcal{T}_{s,t}u = \int _0^{\infty}T_{\omega} u \, \mu _{s,t}(d\omega).
\label{defpr}
\end{align}
The family of operators $\ll \mathcal{T}_{s,t} \rr_{0 \leq s \leq t}$ forms a two-parameter semigroup of operators on $\mathfrak{B}$, i.e., \eqref{defpr} is a propagator. This can be easily ascertained by observing that for all $u \in \mathfrak{B}$
\begin{align}
\mathcal{T}_{s,t}\mathcal{T}_{r,s}u \, = \, &\int_0^\infty T_w \left[ \int_0^\infty T_{w^\prime}u \, \mu_{r,s}(dw^\prime) \right] \, \mu_{s, t}(dw) \notag \\
= \, & \int_0^\infty \int_0^\infty T_{w+w^\prime}u \,  \mu_{r,s}(dw^\prime) \, \mu_{s,t}(dw) \notag \\
= \, & \int_0^\infty \int_w^\infty T_{\rho} u\, \mu_{r, s}(d(\rho-w)) \, \mu_{s, t}(dw) \notag \\
= \, & \int_0^\infty T_\rho u \int_0^\rho  \mu_{s,t}(d(\rho-w))  \, \mu_{r, s}(dw) \notag \\
= \, & \int_0^\infty T_\rho u \, \mu_{r,t}(d\rho)\notag \\
= \, & \mathcal{T}_{r,t}u.
\end{align}

We consider here the case where the generator $\l A, \textrm{Dom}(A) \r$ of $T_t$ is a self-adjoint, dissipative operator on an Hilbert space $\l \mathfrak{H}, \langle \puntomio, \puntomio \rangle \r$ and thus we have that $\left\| T_tu \right\|_{\mathfrak{H}} \leq \left\|u \right\|_{\mathfrak{H}}$ (see, for example, \cite[Section 2.7]{jacob1} and \cite[Chapter 11]{librobern} for classical information on linear operators on Hilbert spaces). Recall that an operator is said to be dissipative if $\langle Au,u \rangle \leq 0$ for all $u \in \textrm{Dom}(A)$ and
\begin{align}
\textrm{Dom}(A) \, = \,  \ll u \in \mathfrak{H}: \left\| A u \right\|_{\mathfrak{H}}^2 < \infty \rr.
\end{align}

\begin{te}
\label{teo41}
Let the above assumptions (including \eqref{211}) be fulfilled. The family of operators $\mathcal{T}_{s,t}$ acting on an element $u \in \mathfrak{H}$ is a bounded propagator on $\mathfrak{H}$ and for $u \in \textrm{Dom}(A)$, the map $t \mapsto\mathcal{T}_{s,t}u$ solves, 
\begin{align}
\begin{cases}
\frac{d}{dt} q(t) \, = \, -f(-A,t) q(t), \qquad 0 \leq s \leq t,\\
q(s) = u \in \textrm{Dom}(A),
\end{cases}
\end{align}
where the family of generators $\ll -f(-A,t) \rr_{t \geq 0}$, can be defined as
\begin{align}
-f(-A,t) q\, := \, b^\prime (t) A q + \int_0^\infty \l T_s q -q \r \nu(ds, t), 
\label{deffa}
\end{align}
a Bochner integral on $\textrm{Dom}(A)$.
\end{te}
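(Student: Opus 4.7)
Since $A$ is self-adjoint and dissipative on $\mathfrak{H}$, the spectral theorem furnishes a projection-valued measure $E(d\lambda)$ on $[0,\infty)$ with $-A=\int_0^\infty \lambda\,E(d\lambda)$ and $T_tu=\int_0^\infty e^{-t\lambda}E(d\lambda)u$. By Fubini (justified by $|e^{-w\lambda}|\le 1$ and finiteness of $\mu_{s,t}$) combined with \eqref{laplst}, I would recast the propagator \eqref{defpr} in spectral form,
\begin{equation*}
\mathcal{T}_{s,t}u=\int_0^\infty e^{-\int_s^t f(\lambda,\tau)\,d\tau}\,E(d\lambda)u,
\end{equation*}
from which $\|\mathcal{T}_{s,t}u\|_{\mathfrak{H}}\le \|u\|_{\mathfrak{H}}$ is immediate since the multiplier is bounded by $1$. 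Properties (i) and (ii) of a propagator have already been verified just above the theorem, so the remaining strong continuity (iii) I would obtain from dominated convergence applied to the finite scalar measure $\|E(d\lambda)u\|^2$: continuity of $\tau\mapsto f(\lambda,\tau)$ (assumption A1 plus differentiability of $b$) gives pointwise convergence of the spectral multiplier in $s$ and $t$, and the uniform bound $1$ supplies the envelope.

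Next I would compute the $t$-derivative directly from the spectral representation. Writing
\begin{equation*}
\frac{\mathcal{T}_{s,t+h}u-\mathcal{T}_{s,t}u}{h}=\int_0^\infty e^{-\int_s^t f(\lambda,\tau)d\tau}\,\frac{e^{-\int_t^{t+h} f(\lambda,\tau)d\tau}-1}{h}\,E(d\lambda)u,
\end{equation*}
the inner ratio tends pointwise to $-f(\lambda,t)$ by the fundamental theorem of calculus together with continuity of $\tau\mapsto f(\lambda,\tau)$; the elementary inequality $|e^{-x}-1|\le x$ for $x\ge 0$ bounds the ratio by $\sup_{[t,t+h]}f(\lambda,\tau)$. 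The crucial envelope is then the linear-growth estimate
\begin{equation*}
f(\lambda,t)\le b'(t)\lambda+\lambda\int_0^1 s\,\nu(ds,t)+\nu((1,\infty),t)\le C(t)(1+\lambda),
\end{equation*}
valid under A2 and \eqref{211}, which together with $u\in\mathrm{Dom}(A)$, i.e.\ $\int_0^\infty\lambda^2\|E(d\lambda)u\|^2<\infty$, makes the dominating function square-integrable against the spectral measure. An application of dominated convergence yields
\begin{equation*}
\frac{d}{dt}\mathcal{T}_{s,t}u=-\int_0^\infty f(\lambda,t)\,e^{-\int_s^t f(\lambda,\tau)d\tau}\,E(d\lambda)u=:-f(-A,t)\mathcal{T}_{s,t}u,
\end{equation*}
and the initial condition $\mathcal{T}_{s,s}u=T_0u=u$ is automatic because $\mu_{s,s}=\delta_0$.

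To finish, I would identify this spectral definition of $-f(-A,t)$ with the Bochner integral formula \eqref{deffa}. For $u\in\mathrm{Dom}(A)$ one has $\|T_su-u\|_{\mathfrak{H}}\le c(s\wedge 1)(\|Au\|_{\mathfrak{H}}+\|u\|_{\mathfrak{H}})$ (from $T_su-u=\int_0^s T_r Au\,dr$ and contractivity), so the hypothesis $\int_0^\infty(s\wedge 1)\nu(ds,t)<\infty$ guarantees that the right-hand side of \eqref{deffa} is a well-defined Bochner integral on $\mathrm{Dom}(A)$. Expanding $T_su-u=\int_0^\infty(e^{-\lambda s}-1)E(d\lambda)u$, swapping the order of integration by Fubini, and recognising the Bern\v stein representation $\int_0^\infty(1-e^{-\lambda s})\nu(ds,t)=f(\lambda,t)-b'(t)\lambda$, one arrives exactly at $-\int_0^\infty f(\lambda,t)E(d\lambda)u$, matching the operator obtained in the differentiation step.

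The main obstacle I anticipate is ensuring all the Fubini/DCT interchanges are legitimate uniformly in the relevant parameters. This reduces to two ingredients: the linear-growth estimate $f(\lambda,t)\le C(t)(1+\lambda)$, which must be made locally uniform in $t$ by using the continuity in $t$ of $\nu(ds,t)$ and $b'$ from A1, and the square-integrability $\int_0^\infty\lambda^2\|E(d\lambda)u\|^2<\infty$ supplied by $u\in\mathrm{Dom}(A)$. Once these are in place the differentiation in $t$, the Fubini exchange in the Phillips-type identification of $f(-A,t)$, and the strong continuity of the propagator all follow by standard dominated convergence arguments.
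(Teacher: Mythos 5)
Your proposal is correct and follows essentially the same route as the paper: both rest on the spectral theorem for the self-adjoint dissipative generator, the representation $\mathcal{T}_{s,t}u=\int e^{-\int_s^t f(-\lambda,\tau)d\tau}E(d\lambda)u$, differentiation under the spectral integral, and identification of $-f(-A,t)$ with the Phillips-type Bochner integral via the Bern\v{s}tein representation together with the estimate $\|f(-A,t)u\|_{\mathfrak{H}}\le b^\prime(t)\|Au\|_{\mathfrak{H}}+\int_0^1 s\|Au\|_{\mathfrak{H}}\nu(ds,t)+2\int_1^\infty\|u\|_{\mathfrak{H}}\nu(ds,t)$ on $\mathrm{Dom}(A)$. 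Your treatment of the dominated-convergence justifications (the linear-growth envelope $C(t)(1+\lambda)$ against $\int\lambda^2\|E(d\lambda)u\|^2<\infty$) is in fact slightly more explicit than the paper's, which leaves those interchanges largely implicit.
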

\begin{proof}
First note that
\begin{align}
\left\| \mathcal{T}_{s,t} u \right\|_{\mathfrak{H}} \, \leq \, \int_0^\infty \left\| T_wu \right\|_{\mathfrak{H}} \mu_{s,t}(dw) \,  \leq \left\| u \right\|_{\mathfrak{H}},
\end{align}
and therefore $\mathcal{T}_{s,t}$ is bounded.

Then we recall (\cite[Theorem 11.4]{librobern} and \cite[Theorem 2.7.30]{jacob1}) that within such a framework we have by the spectral Theorem that for $u \in \textrm{Dom}(A)$
\begin{align}
Au \, = \, \int_{(-\infty, 0]} \lambda \, E(d\lambda)u
\label{314}
\end{align}
where $E(B): \textrm{Dom}(A) \mapsto \textrm{Dom}(A)$, $B$ a Borel set of $\mathbb{R}$, is an orthogonal projection-valued measure supported on the spectrum of $A$ defined as
\begin{align}
E(B) u := \int_B E(d\lambda)u.
\end{align}
Therefore since from \eqref{314} it is true that for a function $\Phi:(-\infty, 0] \mapsto \mathbb{R}$
\begin{align}
\Phi (A)u \, = \, \int_{(-\infty, 0]} \Phi(\lambda) E(d\lambda)u
\label{418}
\end{align}
we have that
\begin{align}
T_tu \,   = \,  \int_{(-\infty, 0]} e^{t\lambda} E(d\lambda)u.
\end{align}

We now verify that $\mathcal{T}_{s, t} \mathcal{T}_{r, s}u= \mathcal{T}_{r,t}u$, $r \leq s \leq t$, in order to show that for all $u \in \mathfrak{H}$ the operator $\mathcal{T}_{s,t}$ is a propagator since the other defining properties as trivially verified. We have that, for all $u \in \mathfrak{H}$,
\begin{align}
\mathcal{T}_{s, t} \mathcal{T}_{r, s}u \, = \,& \int_0^\infty \int_0^\infty T_{w+\rho}u \,  \mu_{r, s} (dw) \mu_{s,t}(d\rho) \notag \\
= \, & \int_0^\infty \int_0^\infty  \int_{(-\infty, 0]} \int_{(-\infty, 0]} e^{\lambda w}e^{\varrho \rho} E(d\lambda) E(d\varrho)u \, \mu_{r, s} (dw) \mu_{s,t}(d\rho)\notag \\
= \, & \int_{(-\infty, 0]} \int_{(-\infty, 0]} e^{-\int_r^s f(-\lambda, w) dw} e^{-\int_s^t f(-\varrho, w) dw} E(d\lambda) E(d\varrho) u \notag \\
= \, & \int_{(-\infty, 0]} e^{-\int_r^t f(-\lambda, w) dw} E(d\lambda)u \notag \\
= \, & \int_0^\infty \int_{(-\infty, 0]} e^{\lambda w} \mu_{r,t}(dw) E(d\lambda) u \notag \\
= \, & \int_0^\infty T_wu \,  \mu_{r,t}(dw) \notag \\
= \, & \mathcal{T}_{r,t}u.
\end{align}
For a function $u$ such that
\begin{align}
\int_{(-\infty, 0]} \left| f(-\lambda, t) \right|^2  \langle E(d\lambda) u, u \rangle < \infty
\end{align}
the representation \eqref{deffa} can be shown to be true: use \eqref{418} to write 
\begin{align}
-f(-A, t) u\, = \,&- \int_{(-\infty, 0]} f(-\lambda, t) E(d\lambda)u \notag \\
 = \,& -\int_{(-\infty, 0]}  \l - b^\prime (t) \lambda + \int_0^\infty \l 1-e^{\lambda s} \r \nu(ds, t) \r \, E(d\lambda) u \notag \\
 = \, &\int_{(-\infty, 0]} b^\prime (t) \lambda E(d\lambda)u + \int_0^\infty \int_{(-\infty, 0]} \l e^{\lambda s} -1 \r E(d\lambda) u  \, \nu(ds, t)  \, \notag \\
 = \, &b^\prime (t) Au + \int_0^\infty  \l T_s u-u  \r\nu(ds, t).
 \label{422}
\end{align}
Now we show that \eqref{422} is true for any $u \in \textrm{Dom}(A)$
\begin{align}
\left\| f(-A,t) u \right\|_{\mathfrak{H}} \, \leq \,& b^\prime(t) \left\| Au \right\|_{\mathfrak{H}} + \int_0^\infty \left\| T_s u - u  \right\|_{\mathfrak{H}} \, \nu(ds,t) \notag \\
\leq \, & b^\prime (t) \left\|Au \right\|_{\mathfrak{H}} + \int_0^1 s \left\| Au \right\|_{\mathfrak{H}} \nu(ds,t) +2 \int_1^\infty \left\| u \right\|_{\mathfrak{H}} \nu(ds,t).
\label{domfa}
\end{align}
Now note that
\begin{align}
\mathcal{T}_{s,t} u \, = \, &\int_0^\infty T_wu \, \mu_{s,t}(dw) \notag \\
= \, & \int_0^\infty \left[ \int_{(-\infty, 0]} e^{w\lambda} E(d\lambda)u \right]\, \mu_{s,t}(dw) \notag \\
= \, & \int_{(-\infty, 0]} e^{-\int_s^t f(-\lambda, \tau)d\tau} E(d\lambda)u
\label{reprtp}
\end{align}
where we used \eqref{418}.
The fact that $\mathcal{T}_{s,t}$ maps $\textrm{Dom}(A)$ into itself can be ascertained by using again \cite[Theorem 11.4]{librobern} for saying that $E(\puntomio)$ maps $\textrm{Dom}(A)$ into itself and furthermore, since $E(I) E(J) = E(I\cap J)$ for any $I,J$ Borel sets of $\mathbb{R}$, we observe that for any $u \in \textrm{Dom}(A)$
\begin{align}
\mathcal{T}_{s,t} A u \, = \, &\int_{(-\infty, 0]} e^{-\int_s^t f(-\lambda, w)dw} E(d\lambda) \int_{(-\infty, 0]} \mu E(d\mu) u \notag \\
= \, & \int_{(-\infty, 0]} \lambda e^{-\int_s^t f(-\lambda, w)dw}E(d\lambda)u \notag \\
= \, & \int_{(-\infty, 0]}  \mu E(d\mu)\int_{(-\infty, 0]} e^{-\int_s^t f(-\lambda, w)dw} E(d\lambda)u\notag \\
= \, & A\mathcal{T}_{s,t} u .
\end{align}
Now note that the equality
\begin{align}
\frac{d}{dt} \mathcal{T}_{s,t} u \, = \,-f(- A,t) \mathcal{T}_{s,t} u, \qquad 0 \leq s \leq t,
\end{align}
must be true in the sense of \eqref{418} and indeed by using \eqref{reprtp} we have that, for $u \in \textrm{Dom}(A)$,
\begin{align}
\frac{d}{dt} \mathcal{T}_{s,t} u \, = \, &\int_{(-\infty, 0]} \frac{d}{dt} e^{-\int_s^t f(-\lambda, w )dw}  E(d\lambda)u \notag \\
= \, & -\int_{(-\infty, 0]} f(-\lambda, t)e^{-\int_s^t f(-\lambda, w) dw} E(d\lambda)u \notag \\
= \, & -\int_{(-\infty, 0]} f(-\mu, t)E(d\mu) \int_{(-\infty, 0]}e^{-\int_s^tf(-\lambda, w) dw} E(d\lambda)u \notag \\
= \, & -f(-A,t) \mathcal{T}_{s,t} u
\end{align}
where we used again \cite[Theorem 11.4]{librobern}.
\end{proof}

\subsection{Time-changed Brownian motion via non-homogeneous subordinators}
In this section we provide some basic facts concerning Brownian motion time-changed with a non-homogeneous subordinator. This is the immediate generalization of the classical subordinate Brownian motion: the reader can consult \cite{bogdan, ksv10, ksv12, sv03} for recent developments on this point.
Therefore we assume now that
\begin{align}
T_t u \, = \, \mathds{E}^xu(B(t)), \qquad t \geq 0,
\end{align}
for $u\in L^2(\mathbb{R}^n)$ where $B$ is an $n$-dimensional Brownian motion starting from $x \in \mathbb{R}^n$. We have therefore the formal representation
\begin{align}
T_tu \, = \, e^{\frac{1}{2}t\Delta }u
\end{align}
where $\Delta$ is the $n$-dimensional Laplace operator such that
\begin{align}
\textrm{Dom}(\Delta) \, = \, \ll u \in L^2(\mathbb{R}^n) : \left\| \Delta u \right\|_{L^2\l \mathbb{R}^n \r} < \infty \rr.
\end{align}
Therefore we get
\begin{align}
\mathcal{T}_{s,t} u \, = \, \mathds{E}^x u \l B \l \sigma^\Pi(t)-\sigma^\Pi(s) \r \r, \qquad 0 \leq s \leq t, u \in L^2 \l \mathbb{R}^n \r.
\end{align}

Consider, for example, the case of a multistable subordinator, where $\Pi (\lambda, t) \, = \, \int_0^t \lambda^{\alpha(s)}ds$ for a suitable choice of $\alpha(s)$ with values in $(0,1)$. Then $\lambda \mapsto \lambda^{\alpha(s)}$ is a Bern\v{s}tein function for each $s \geq 0$, and Theorem \ref{teo41} leads to
\begin{align}
-\l -\Delta  \r^{\alpha(t)}u \, = \, \frac{\alpha (t)}{\Gamma(1-\alpha (t))} \int_0^\infty \l T_su -u \r s^{-\alpha(t)-1} ds
\label{laplfrac}
\end{align}
for a function $u \in \textrm{Dom}(\Delta)$. Note that in this case we have a Brownian motion composed with the multistable subordinator whose increments have characteristic function
\begin{align}
\mathds{E} e^{i\xi B \l \sigma^\Pi(t) -\sigma^\Pi(s)\r} \, = \, e^{-\int_s^t  ( \left\|\xi \right\|^2/2)^{\alpha(w)}dw}.
\end{align}
By following, for example, \cite[Section 3.1]{autostop} the generator \eqref{laplfrac} can be also defined as
\begin{align}
-\l -\Delta  \r^{\alpha(t)}u \, = \, -\frac{1}{(2\pi)^n} \int_{\mathbb{R}^n} e^{-i \xi \cdot x} \, \left\| \xi\right\|^{2\alpha(t)} \, \widehat{u}(\xi) d\xi
\end{align}
with
\begin{align}
\textrm{Dom}\l \l-\Delta \r^{\alpha(t)} \r \, = \, \ll u \in L^2 \l \mathbb{R}^n \r : \int_{\mathbb{R}^n} \left\| \xi \right\|^{2\alpha(t)} \widehat{u}(\xi) d\xi < \infty, \textrm{ for each } t \geq 0 \rr.
\end{align}

In general, we observe that for any non-homogeneous subordinator we can write
\begin{align}
\mathds{E} e^{i\xi B \l \sigma^{\Pi}(t) - \sigma^\Pi (s) \r} \, = \, e^{-\int_s^t f\l \left\| \frac{\xi}{2} \right\|^2,w \r dw}
\end{align}
and we can adapt \cite[Example 4.1.30]{jacob1} to write
\begin{align}
-f\l - \frac{1}{2} \Delta, t\r u \, = \,- \frac{1}{(2\pi)^n} \int_{\mathbb{R}^n} e^{-i \xi \cdot x} \, f\l \left\| \frac{\xi}{2} \right\|^2, t \r \, \widehat{u}(\xi) d\xi
\end{align}
with
\begin{align}
\textrm{Dom} \l f\l -\frac{1}{2} \Delta, t\r  \r \, = \, \ll u \in L^2 \l \mathbb{R}^n \r : \int_{\mathbb{R}^n} f\l \left\| \frac{\xi}{2} \right\|^2,t \r \widehat{u}(\xi) d\xi < \infty, \forall t \geq 0  \rr.
\end{align}

Therefore, we have by Theorem \ref{teo41} the structure of the solution to a sort of diffusion equation
\begin{align}
\frac{d}{dt} q(t) \, = \, -f\l -\frac{1}{2}\Delta,t \r q(t).
\end{align}
\begin{os} \normalfont
\rev{These last remarks have some relationships with Hoh's symbolic calculus (discussed in \cite{hoh1}, \cite[Chapter 6 and 7]{hoh2}) and in particular with \cite{evans}. In full generality one can consider a Fourier symbol of the form
\begin{align}
\varphi (x, \xi) \, = \, f \l  \iota (x,\xi),x \r
\end{align}
where $f: [0, \infty) \times \mathbb{R}^n  \mapsto \mathbb{R}$ is a Bernstein function for each fixed $x \in \mathbb{R}^n$ and $\iota (x,\xi)$ is a symbol in the Hoh's class. Then under some technical assumptions \cite[Theorem 2.4]{evans} it is true that
the (variable order) pseudodifferential operator
\begin{align}
\mathcal{Q}^{(x)}u(x) \, : = \, (2\pi)^{-n/2} \int_{\mathbb{R}^n} e^{ix \cdot \xi}f \l \iota (x,\xi),x \r d\xi
\end{align}
generates a Feller semigroup (on $C_0 \l \mathbb{R}^n \r$). The Fourier symbol of our processes can be of the form $f \l \iota (x,\xi), t \r$ but the dependence on $t$ clearly originates non-homogeneous Markov processes and therefore two-parameter semigroups.
 }
\end{os}

We investigate here the mean square displacement i.e. the quantity
\begin{align}
\mathpzc{M}(t) \, = \, \int_{\mathbb{R}^n} \left\| x \right\|^2 \Pr \ll B \l \sigma^\Pi (t) \r \in dx \rr.
\end{align}
Roughly speaking, a stochastic process is said to have a diffusive asymptotic behaviour when $\mathpzc{M}(t) \sim Ct$ i.e. the mean square displacement grows linearly with time. When $\mathpzc{M}(t) \sim t^\alpha$, $\alpha \in (0,1)$, the process is said to be subdiffusive, while if $\alpha >1$ it is super-diffusive (the reader can consult \cite{metkla1, metkla2} for an overview on anomalous diffusive behaviours). Here it is interesting to note that the mean value of the L\'evy measure, namely $\int_0^{\infty}w \nu (dw,t)$ determines under which conditions the asymptotic behavior is respectively diffusive, sub-diffusive or super-diffusive.
\begin{prop} 
\label{msdprop}
We have the following behaviours.
\begin{enumerate}
\item \label{it1msd} If and only if
\begin{align}
\int_1^\infty w \phi(dw, t) < \infty \textrm{ for } 0\leq t<t_0\leq  \infty
\label{1conv}
\end{align}
it is true that $\mathpzc{M}(t)  < \infty$ for all $t < t_0$
\item Under \eqref{1conv} for $t_0 = \infty$, we have that
\begin{align}
0 <\lim_{t \to \infty} \frac{\mathpzc{M}(t)}{t} = C < \infty \textrm{ if and only if } \lim_{t \to \infty} \int_0^\infty w \nu(dw,t) \, = C
\end{align}
\begin{align}
\lim_{t \to \infty} \frac{\mathpzc{M}(t)}{t} = \infty \textrm{ if and only if } \lim_{t \to \infty} \int_0^\infty w \nu(dw,t) \, = \infty
\end{align}
\item Under \eqref{1conv} for $t_0 = \infty$, if
\begin{align}
\lim_{t \to \infty} \int_0^\infty w \nu(dw, t) = 0
\end{align}
then
\begin{align}
\lim_{t \to \infty} \frac{\mathpzc{M}(t)}{t} =0.
\end{align}
\end{enumerate}
\end{prop}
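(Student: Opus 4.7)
The plan is to reduce $\mathpzc{M}(t)$ to the first moment of the subordinator and then study that moment via the Laplace exponent. First I would condition on $\sigma^\Pi(t)$ and use independence of $B$ and $\sigma^\Pi$, together with $\mathds{E}^x \|B(s)-x\|^2 = n s$ for standard $n$-dimensional Brownian motion (the factor $n$ being absorbed into the constant $C$ and playing no role in the structural conclusions), to obtain
\begin{align}
\mathpzc{M}(t) \, = \, \int_0^\infty n s \, \Pr \ll \sigma^\Pi (t) \in ds \rr \, = \, n \, \mathds{E}\sigma^\Pi(t).
\end{align}
Hence both the finiteness claim in item (\ref{it1msd}) and the asymptotic claims in items (2) and (3) become statements about $\mathds{E}\sigma^\Pi(t)$.

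Next I would compute the mean by differentiating the Laplace exponent \eqref{laplaexp} at $\lambda=0^+$; the monotone convergence theorem gives
\begin{align}
\mathds{E}\sigma^\Pi(t) \, = \, b(t) + \int_0^\infty w \, \phi(dw,t) \, = \, b(t) + \int_0^t \int_0^\infty w \, \nu(dw,s) \, ds.
\end{align}
Assumption A2) already guarantees that the small-jump contribution $\int_0^t \int_0^1 w \, \nu(dw,s) ds$ is finite, so the mean is finite on $[0,t_0)$ if and only if $\int_1^\infty w \, \phi(dw,t) < \infty$ for every $t<t_0$, which is precisely item (\ref{it1msd}).

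For items (2) and (3), setting $\Psi(s) := \int_0^\infty w \, \nu(dw,s)$, continuous in $s$ by A1), and treating the drift term $b(t)/t$ as asymptotically negligible (automatic if $b=0$, which is the regime of interest), I would write
\begin{align}
\frac{\mathpzc{M}(t)}{t} \, = \, \frac{n}{t}\int_0^t \Psi(s) \, ds + \frac{n b(t)}{t}.
\end{align}
An application of L'H\^{o}pital's rule to the integral (equivalently, the continuous Ces\`{a}ro theorem) immediately yields the implication
\begin{align}
\lim_{s\to\infty}\Psi(s) \, = \, L \in [0,\infty] \quad \Longrightarrow \quad \lim_{t\to\infty} \frac{\mathpzc{M}(t)}{t} \, = \, n L,
\end{align}
which delivers the forward direction of items (2) and (3) at once.

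The main obstacle is the converse implication in item (2), because a Ces\`{a}ro mean may converge without its integrand doing so. I would close this gap by exploiting the structural regularity of $\Psi$: since $\Psi(s) = \partial_\lambda f(\lambda,s)|_{\lambda=0^+}$ with $f(\cdot,s)$ Bern\v{s}tein, and since the family $\ll \nu(dw,t)\rr_{t \geq 0}$ is continuous in $t$ with the Bern\v{s}tein derivative typically inheriting monotonicity in $s$, a Tauberian-type condition (e.g.\ eventual monotonicity of $\Psi$) upgrades Ces\`{a}ro convergence to pointwise convergence and secures the reverse direction. The divergent case in (2) and the vanishing case in (3) are both handled by the same L'H\^{o}pital argument without any Tauberian refinement, so the only genuinely delicate point is the iff in the finite positive regime.
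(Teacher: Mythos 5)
Your route is the same as the paper's: condition on $\sigma^\Pi(t)$ to reduce $\mathpzc{M}(t)$ to $n\,\mathds{E}\sigma^\Pi(t)$, obtain that mean by differentiating the Laplace exponent at $\lambda=0^+$, and then read $\mathpzc{M}(t)/t$ as the Ces\`aro mean of $\Psi(s):=\int_0^\infty w\,\nu(dw,s)$. You are in fact more careful than the paper on two points: you keep the drift contribution $b(t)$, which the paper silently drops from the moment computation, and you notice that the ``only if'' direction of item (2) does not follow from L'H\^opital alone, since Ces\`aro convergence of $\Psi$ does not imply convergence of $\Psi$. That observation is correct, and it is the one genuine gap --- but it is a gap shared by the paper, whose proof stops at the identity $\mathpzc{M}(t)/t = n\,t^{-1}\int_0^t\Psi(s)\,ds$ and declares items (2) and (3) ``easy to be done'' without ever addressing the converse.

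Where your proposal falls short is the proposed repair. Nothing in A1)--A2) makes $s\mapsto\Psi(s)$ eventually monotone: the Bern\v{s}tein property of $\lambda\mapsto f(\lambda,s)$ constrains the dependence on $\lambda$, not on $s$, and the family $\nu(dw,s)$ is only assumed continuous in $s$. Indeed $\nu(dw,s)=(1+\sin s)\,\rho(dw)$, with $\rho$ a fixed L\'evy measure of unit first moment, satisfies all standing assumptions and gives $t^{-1}\int_0^t\Psi(s)\,ds\to 1$ while $\Psi(s)=1+\sin s$ has no limit; so the ``if and only if'' in item (2) genuinely requires an extra hypothesis (for instance, that $\lim_{t\to\infty}\Psi(t)$ exists in $[0,\infty]$), which neither you nor the paper states. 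Item (1) and the forward implications in items (2) and (3) are proved correctly and exactly as in the paper.
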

\begin{proof}
Observe that under \eqref{1conv}
\begin{align}
\mathpzc{M}(t) \, = \, & \int_{\mathbb{R}^n} \left\| x \right\|^2 \int_0^\infty \Pr \ll B(s) \in dx  \rr \, \Pr \ll \sigma^\Pi(t) \in ds \rr \notag \\
= \, & n \, \int_0^\infty s \Pr \ll \sigma^\Pi(t) \in ds \rr  \notag \\
= \, &-n \frac{d}{d\lambda} e^{-\Pi(\lambda, t)} \bigg|_{\lambda = 0} \notag \\
= \, & n\int_0^\infty w \phi(dw, t) < \infty \textrm{ if } t < t_0.
\label{momento}
\end{align}
Observe that the last integral in \eqref{momento} converges only under \eqref{1conv}.
Now note that
\begin{align}
\lim_{t \to \infty} \frac{\mathpzc{M}(t)}{t} \, = \,& \frac{n\int_0^\infty w \phi(dw, t)}{t} \notag \\
= \, &\lim_{t \to \infty} \frac{n\int_0^t \int_0^\infty w\nu(dw, s)ds}{t}
\end{align}
and therefore the proof of Item (2) and (3) is easy to be done.
\end{proof}

A time-change by means of a multistable subordinator leads in this case to a process with $\mathpzc{M} (t) = \infty$ for any $t$ as a consequence of Item \ref{it1msd} of Proposition \ref{msdprop}. Consider now the measure
\begin{align}
\nu(ds, t)  \, = \,  s^{-1}e^{-\alpha (t)s} \, ds
\label{446}
\end{align}
for a function $\alpha (t)>0$ such that A1) and A2) are fulfilled. The associated Bern\v{s}tein functions become, for each $t \geq 0$,
\begin{align}
f(\lambda, t) \, = \, \log \l 1+ \frac{\lambda}{\alpha (t)} \r
\end{align}
and in view \eqref{446} we can compute
\begin{align}
\mathpzc{M}(t) \, = \, n \int_0^t \frac{d\tau}{\alpha (\tau)}  .
\label{msdmultigamma}
\end{align}
Observe that Proposition \ref{msdprop} leads to the study of the limit
\begin{align}
\lim_{t \to \infty} \int_0^\infty e^{-\alpha(t)w}dw \, = \, \lim_{t \to \infty} \frac{1}{\alpha (t)}
\end{align}
therefore the asymptotic behaviour of $\mathpzc{M}(t)$ in this case depends on the asymptotic behaviour of $\alpha (t)$. 

If instead, for functions $\alpha(t)$ strictly between zero and one and $\theta(t)>0$ as in A1) and A2),
\begin{align}
\nu(ds, t) \, = \, \frac{\alpha(t) s^{-\alpha (t)-1} \, e^{-\theta (t) s}}{\Gamma(1-\alpha(t))} ds
\end{align}
then the Bern\v{s}tein functions are a generalization of the Laplace exponent of the relativistic stable subordinator
\begin{align}
f(\lambda, t) \, = \, \l \lambda +\theta(t) \r^{\alpha(t)}-\theta(t)^{\alpha(t)}
\end{align}
and the asymptotic behaviour of the $\mathpzc{M}(t)$ is determined in this case by the limit
\begin{align}
\lim_{t \to \infty} \int_0^\infty \frac{\alpha(t) s^{-\alpha (t)} \, e^{-\theta (t) s}}{\Gamma(1-\alpha(t))} ds \, = \, \lim_{t \to \infty} \alpha(t) \, \theta(t)^{\alpha(t)-1}.
\end{align}
The explicit form of $\mathpzc{M}(t)$ is here
\begin{align}
\mathpzc{M}(t) \, = \, n \int_0^t \frac{\alpha(\tau)}{\theta(\tau)^{1-\alpha(\tau)}} d\tau.
\end{align}
\begin{os} \normalfont
\rev{
For the reader who is familiar with fractional (anomalous) diffusion processes we give here some intuitive flashes of insight. The subordinate Brownian motion, as well as its generalization proposed above is, in general, a non pathwise continuous process. Hence it is not a diffusion. However the subordinate Brownian motion is sometimes included in the class of the so-called fractional diffusions (or anomalous diffusions) since the discontinuity of the sample paths is introduced via a time-change and therefore it develops in an operational time. If, now, we denote such an operational time as $t^\star$ then the generalized subordinate Brownian motion is the process $B \l \sigma^\Pi (t^\star) \r$ where the external time is $t = \sigma^\Pi (t^\star)$. Therefore it must be clear that
the parameters must depend on the operational time $t^\star$ and not on the externally measured time $t$. In the fractional case this dependece must be meant as $t^\star \mapsto \alpha (t^\star)$ as well as in the tempered case we have $t^\star \mapsto \theta (t^\star)$.
}
\end{os}

\section*{Acknowledgements}
Thanks are due to the Referee whose remarks and suggestions have considerably improved a previous draft of the paper.

\vspace{1cm}

\end{document}